\numberwithin{equation}{section}
\theoremstyle{plain}
\newtheorem{theorem}{Theorem}[section]
\newtheorem{corollary}[theorem]{Corollary}
\newtheorem{lemma}[theorem]{Lemma}
\newtheorem{proposition}[theorem]{Proposition}
\theoremstyle{definition}
\newtheorem{definition}[theorem]{Definition}
\theoremstyle{remark}
\newtheorem{remark}[theorem]{Remark}
\newcommand{\Opt}{\mathrm{OptGeo}}
\newcommand{\geo}{\rm Geo}
\newcommand{\brac}[1]{\left(#1\right)}
\newcommand{\lmts}[2]{\mathop{\overline{\lim}}_{{#1} \rightarrow {#2}} }
\newcommand{\MCP}{{\rm MCP}}
\newcommand{\CD}{{\rm CD}}
\newcommand{\mm}{\mathfrak m}
\newcommand{\ms}{(X,\d,\mm)}
\newcommand{\cdkn}{{\rm CD}(K, N)}
\newcommand{\bekn}{{\rm BE}(K, N)}
\newcommand{\mcp}{{\rm MCP}(K, N)}
\newcommand{\vol}{{\rm Vol}_{\rm g}}
\newcommand{\g}{{\rm g}}
\newcommand{\qq}{\mathfrak{q}}
\newcommand{\R}{\mathbb{R}}
\renewcommand{\H}{\mathbb{H}}
\DeclareMathOperator*{\esssup}{ess\,sup}
\newcommand{\osc}{\mathop{\rm osc}\nolimits} 
\newcommand{\diam}{\mathop{\rm diam}\nolimits} 
\newcommand{\supp}{\mathop{\rm supp}\nolimits}   
\newcommand{\conv}{\mathop{\rm geo}\nolimits}   
\newcommand{\Lip}{\mathop{\rm Lip}\nolimits}
\renewcommand{\d}{{\mathrm d}}
\newcommand{\restr}[1]{\lower3pt\hbox{$|_{#1}$}}
\newcommand{\eps}{\varepsilon}
\newcommand{\nchi}{{\raise.3ex\hbox{$\chi$}}}
\renewcommand*{\thefootnote}{\fnsymbol{footnote}}
\title{\large{\bf Sharp Poincar\'e inequalities under Measure Contraction Property}
}
\begin{document}
\author{Bang-Xian Han\textsuperscript{$*$,$\dagger$}
\and
Emanuel Milman\textsuperscript{$*$,$\ddagger$}
}

\footnotetext[1]{Department of Mathematics, Technion-Israel Institute of Technology, Haifa 32000, Israel.}
\footnotetext[2]{Email: hanbangxian@gmail.com.}
\footnotetext[3]{Email: emilman@tx.technion.ac.il.}

\begingroup    \renewcommand{\thefootnote}{}    \footnotetext{2010 Mathematics Subject Classification: 35P15, 58J50, 53C23, 51F99.}
    \footnotetext{Keywords: Poincar\'e inequality, metric measure spaces, Measure Contraction Property, Curvature-Dimension Condition, sub-Riemannian manifolds, Heisenberg group.}
    \footnotetext{The research leading to these results is part of a project that has received funding from the European Research Council (ERC) under the European Union's Horizon 2020 research and innovation programme (grant agreement No 637851).}
\endgroup

\date{} 
\maketitle

\begin{abstract}
We prove a sharp Poincar\'e inequality for subsets $\Omega$ of (essentially non-branching) metric measure spaces satisfying the Measure Contraction Property $\MCP(K,N)$, whose diameter is bounded above by $D$. This is achieved by identifying the corresponding one-dimensional model densities and a localization argument, ensuring that the Poincar\'e constant we obtain is best possible as a function of $K$, $N$ and $D$. Another new feature of our work is that we do not need to assume that $\Omega$ is geodesically convex, by employing the geodesic hull of $\Omega$ on the energy side of the Poincar\'e inequality. In particular, our results apply to geodesic balls in ideal sub-Riemannian manifolds, such as the Heisenberg group. 
\end{abstract}

\section{Introduction}

Determining the optimal constant in the Poincar\'e inequality, which in an appropriate setting is equivalent to the spectral-gap of a corresponding Laplacian, is one of the most classical problems in comparison geometry. Given a metric measure space $(X,\d,\mm)$, its associated Poincar\'e constant is given by
\[ \lambda_{(X,\d,\mm)}:= \inf \left \{\frac {\int_{X}  |{\nabla }_{ X} f|^2 \,\mm}{\int_{X} |f|^2  \,\mm}: f \in \Lip_{loc}(X,\d) , \int_{X} f  \,\mm =0,  0 < \int_X |f|^2 \mm < \infty \right \},
 \]  where $\Lip_{loc}(X,\d)$ denotes the class of locally Lipschitz functions, and the local Lipschitz constant  $ |{\nabla }_{X} f|: X \mapsto \R$ is defined as
 \[
  |{\nabla }_{X} f|(x):=\lmts{y}{x} \frac{|f(y)-f(x)|}{\d(y, x)} 
\]
(and $0$ if $x$ is an isolated point). Under very general assumptions on $(X,\d,\mm)$, it is known \cite{AGS-CalculusAndHeatFlow,AGS-DensityOfLipFuncs} that Lipschitz functions are dense in the Sobolev space $W^{1,2}(X,\d,\mm)$, and hence the above definition may be equivalently stated using Sobolev functions; as a matter of convenience, we employ Lipschitz functions throughout this work. 
Given a family $\mathcal F:=\{(X_\alpha, \d_\alpha, \mm_\alpha): \alpha \in \mathcal A\}$  of metric measure spaces, we define the optimal Poincar\'e constant $\lambda_{\mathcal F}$ on $\mathcal F$ by:
\[
\lambda_{\mathcal F}:= \mathop{\inf}_{\alpha \in \mathcal A} \lambda_{(X_\alpha,\d_\alpha,\mm_\alpha)} . 
\]

One of the most studied families of metric measure spaces are smooth connected orientable compact Riemannian manifolds $(M,\g,\vol)$ with Ricci curvature bounded below by $K\in \R$, dimension bounded above by $N\in [1,\infty]$, and diameter bounded above by $D\in (0,\infty]$; the manifolds are typically allowed to have locally convex boundary (in the sense that the second fundamental form on $\partial M$ is positive semi-definite). In this case, $\lambda_{(M,\g,\vol)}$ is the first positive eigenvalue of the Laplace-Beltrami operator $-\Delta_\g$ with vanishing Neumann boundary conditions. Two well-known examples are:
\begin{itemize}
\item The Lichnerowicz theorem \cite{LichnerowiczBook} (see also \cite{EscobarLichnerowiczWithConvexBoundary,XiaLichnerowiczWithConvexBoundary} for the case when $\partial M \neq \emptyset$ is locally convex) asserts that $\lambda_{\mathcal F} = \frac{N}{N-1} K$ for the family $\mathcal F$ of $N$-dimensional manifolds as above when $K > 0$ and $D=\infty$. 
\item The  Li--Yau \cite{LiYauEigenvalues} and Zhong--Yang \cite{ZhongYangImprovingLiYau} theorems assert that $\lambda_{\mathcal F} = \frac{\pi^2}{D^2}$ for the family $\mathcal F$ of manifolds as above when $K=0$. \end{itemize}

More generally, one may equip $M$ with a measure $\mu$ having smooth positive density with respect to $\vol$, thereby obtaining the family of weighted Riemannian manifolds $(M,\g,\mu)$. In this setting, $\lambda_{(M,\g,\mu)}$ coincides with the first positive eigenvalue of an appropriate weighted Laplacian $-\Delta_{\g,\mu}$, and the Bakry--\'Emery Curvature-Dimension condition $\bekn$  gives rise to natural generalized notions of Ricci curvature lower bound $K$ and dimension upper bound $N$ \cite{B-H, BE-D}. 
Based on a refined gradient comparison technique originating in the work of Kr\"oger \cite{Kroger-GradientComparison} and a careful analysis of the underlying model spaces, sharp estimates on $\lambda_{\mathcal F}$ for the family of weighted Riemannian manifolds satisfying $\bekn$ and whose diameter is bounded above by $D$ were obtained by Bakry and Qian in \cite{BakryQianSharpSpectralGapEstimatesUnderCDD}, and the associated one-dimensional model spaces were identified. In particular, the Lichnerowicz and Li--Yau / Zhong-Yang theorems continue to hold under $\bekn$. 

Thanks to the development of Optimal Transport theory, it was realized that the Bakry--\'Emery condition $\bekn$ in the smooth setting can  be equivalently characterized by $K$-convexity of an $N$-entropy functional along $L^2$-Wasserstein geodesics  \cite{CMS01,SVR-T}. Motivated by this, an appropriate  $\cdkn$ Curvature-Dimension condition for (possibly non-smooth) metric measure spaces was introduced independently by Sturm \cite{SturmCD1, SturmCD2} and Lott--Villani \cite{Lott-Villani09,LottVillani-WeakCurvature}, encapsulating a certain synthetic Ricci curvature lower bound $K$ and dimension upper bound $N$. The class of metric measure spaces satisfying $\cdkn$, which includes all of the previous smooth examples, has the advantage of being closed in the measured Gromov-Hausdorff topology. Naturally, analyzing $\lambda_{\mathcal F}$ in this generality presents a greater challenge, since many of the analytic tools from the smooth setting are not available any longer. 

Fortunately, one tool which is nowadays available is the localization technique. In the Euclidean setting, this method has its roots in the work of Payne and Weinberger \cite{PayneWeinberger} on the spectral-gap for convex domains in Euclidean space, and has been further developed by Gromov and V.~Milman \cite{Gromov-Milman} and Kannan, Lov\'asz and Simonovits \cite{KLS}. Roughly speaking, the localization paradigm reduces the task of establishing various analytic and geometric inequalities on an $n$-dimensional space to a one-dimensional problem. Recently, in a ground-breaking work \cite{KlartagLocalizationOnManifolds}, B.~Klartag reinterpreted the localization paradigm as a measure disintegration adapted to $L^1$-Optimal-Transport, and extended it to weighted Riemannian manifolds satisfying $\bekn$. In a subsequent breakthrough, Cavalletti and Mondino \cite{CavallettiMondino-Localization} have succeeded to extend this technique to a very general subclass of metric measure spaces satisfying $\cdkn$. Using the localization method, in conjunction with an extremal point characterization of one-dimensional measures satisfying $\bekn$ and a careful analysis of the spectral-gap for one-dimensional model measures, the Bakry--Qian sharp estimates on the Poincar\'e constant have been extended in E.~Calderon's Ph.D. thesis \cite{Calderon18} to the entire range $N \in (-\infty,0] \cup [2,\infty]$ (his results are formulated for smooth weighted manifolds satisfying the $\bekn$ condition, but apply equally well to non-smooth $\cdkn$ spaces on which the localization technique is available). 

\medskip

Another property of metric measure spaces was introduced independently by Ohta \cite{Ohta-MCP} and Sturm \cite{SturmCD2} as a weaker variant of the $\cdkn$ condition. This property, called the Measure Contraction Property $\mcp$, is equivalent to the $\cdkn$ condition on smooth unweighted $N$-dimensional Riemannian manifolds, 
but may be strictly weaker for more general spaces. It was shown by Juillet \cite{Juillet09} that the $n$-dimensional Heisenberg group $\H^n$, which is the simplest example of a non-trivial sub-Riemannian manifold,  equipped with the Carnot-Carath\'eodory metric and (left-invariant) Lebesgue measure, does not satisfy the $\cdkn$ condition for \emph{any} $K,N \in \R$, but does satisfy ${\rm MCP}(0,N)$  for $N = 2n+3$.
More general Carnot groups were shown to satisfy $\MCP(0,N)$ for appropriate $N$ by Barilari and Rizzi \cite{Rizzi-MCPonCarnot,BarilariRizzi-MCPonHTypeCarnot}. 
Very recently, interpolation inequalities \`a la Cordero-Erausquin--McCann--Schmuckenshl\"ager \cite{CMS01} have been obtained, under suitable modifications, by Balogh, Krist\'aly and Sipos  \cite{BKS18} for the Heisenberg group and by Barilari and Rizzi \cite{BarilariRizzi18} in the general ideal sub-Riemannian setting.  As a consequence, additional examples of spaces verifying ${\rm MCP}$ but not ${\rm CD}$ have been found, e.g. generalized H-type groups, the Grushin plane and Sasakian structures
(under appropriate curvature lower bounds; for more details, see \cite{BarilariRizzi18}).

Fortunately, the localization paradigm still applies to very general $\mcp$ spaces; this observation has its roots in the work of Bianchini and Cavalletti in the non-branching setting \cite{BianchiniCavalletti-MongeProblem}, and was extended to essentially non-branching $\mcp$ spaces with $N < \infty$ in \cite{Cavalletti-MongeForRCD,CavallettiEMilman-LocalToGlobal,CM-Laplacian}. It is known from the work of Figalli and Rifford \cite{FigalliRifford10} that ideal sub-Riemannian manifolds are indeed essentially non-branching (see Section \ref{sec:prelim} for precise definitions). 
Using localization, Cavalletti and Santarcangelo \cite{CS18} have recently obtained sharp isoperimetric inequalities on $\mcp$ spaces having diameter upper bounded by $D$, by identifying an appropriate family of one-dimensional model $\mcp$ densities. Their work extends the work of the second named author on smooth $\cdkn$ spaces \cite{Milman-JEMS}, where an appropriate family of one-dimensional model $\cdkn$ measures was identified, and which was subsequently generalized to the non-smooth setting by Cavalletti and Mondino \cite{CavallettiMondino-Localization}.

\medskip

In this work, we study the Poincar\'e  inequality in the class $\mathcal{MCP}_{K,N,D}$ of essentially non-branching metric measure spaces verifying the Measure Contraction Property $\mcp$ (with $K \in \R$, $N \in (1,\infty)$) and having diameter upper bounded by $D \in (0,\infty)$. As in the $\cdkn$ setting, determining the sharp constants in analytic one-dimensional inequalities is \emph{a-priori} more difficult than their isoperimetric counterparts, and in particular, we do not know how to obtain the extremal point characterization of one-dimensional $\mcp$ measures (as in \cite{Calderon18} for $\cdkn$ measures). Fortunately, we are able to identify the ``worst" $\mcp$ density supported on an interval of diameter $D$ for the spectral-gap problem by a direct ODE comparison argument, thereby determining the optimal constant $\lambda_{{\mathcal {MCP}}_{K,N,D}}$. 

An additional feature of this work is that we formulate our results on the Poincar\'e inequality for general subsets $\Omega$ of a $\mcp$ space $(X,\d,\mm)$, with $\diam(\Omega) \leq D$.  This is very important for applications, since the $\mcp$ condition forces $(\supp(\mm),\d)$ to be a geodesic space, and so whenever $\Omega \subset \supp(\mm)$ is not geodesically convex, $(\Omega,\d,\mm|_\Omega)$ does not satisfy $\mcp$, and hence results proven for $\mcp$ spaces are not directly applicable. On the other hand, geodesically convex subsets of sub-Riemannian spaces are particularly scarce -- for instance, even for the simplest case of the Heisenberg group $\H^1$, it was shown in \cite{MontiRickly} that the smallest geodesically convex set containing three distinct points which do not lie on a common geodesic is $\H^1$ itself, implying in particular that there are no non-trivial geodesically convex balls in $\H^1$. 

The idea which permits us to handle a general domain $\Omega$ is new even in the $\CD(K,N)$ setting, and immediately allows to extend the sharp Poincar\'e inequalities from \cite{BakryQianSharpSpectralGapEstimatesUnderCDD,Calderon18} (or any other Sobolev inequality) from geodesically convex domains to general domains, in the manner described next. Given a subset $\Omega \subset \supp(\mm)$, denote by $\conv(\Omega)$ its geodesic hull, namely the union of all geodesics starting at $x \in \Omega$ and ending at $y \in \Omega$. Note that $\conv(\Omega)$ need not be geodesically convex, and that $\conv(B_r(x_0)) \subset B_{2r}(x_0)$ by the triangle inequality for any geodesic ball $B_r(x_0)$ of radius $r > 0$. The idea is to use $\conv(\Omega)$ on the energy side of the Poincar\'e inequality. 

Our main result thus reads as follows. Abbreviate $\lambda[h] = \lambda_{(\R, |\cdot| , h \mathcal{L}^1)}$ for the Poincar\'e constant of the density $h$ (with respect to the Lebesgue measure $\mathcal{L}^1$ on $\R$). For $\kappa \in \R$, define the function $s_\kappa: [0, +\infty) \mapsto \R$ (on $[0, \pi/ \sqrt{\kappa})$ if $\kappa >0$) as:
\begin{equation} \label{eq:skappa}
s_\kappa(\theta):=\left\{\begin{array}{lll}
(1/\sqrt {\kappa}) \sin (\sqrt \kappa \theta), &\text{if}~~ \kappa>0,\\
\theta, &\text{if}~~\kappa=0,\\
(1/\sqrt {-\kappa}) \sinh (\sqrt {-\kappa} \theta), &\text{if} ~~\kappa<0.
\end{array}
\right.
\end{equation}
Denote by $D_{K,N}$ the Bonnet--Myers diameter upper-bound:
\begin{equation} \label{eq:DKN}
D_{K,N} := \left\{\begin{array}{lll} \frac{\pi}{\sqrt{K / (N-1)}} & \text{if}~~ K > 0 \\
+\infty & \text{otherwise} 
\end{array} \right .  .
\end{equation}
We refer to Section \ref{sec:prelim} for other missing definitions. 

\begin{theorem} \label{thm:main}
Let $(X,\d,\mm)$ denote an essentially non-branching metric measure space satisfying $\MCP(K,N)$, with $K \in \R$ and $N \in (1,\infty)$. Let $\Omega \subset \supp(\mm)$ be a closed subset 
with $\diam(\Omega) \leq D < \infty$. Then for any (locally) Lipschitz function $f : (X,\d) \rightarrow \R$,
\[
\int_{\Omega} f \mm = 0  \;\; \Rightarrow \;\; \lambda_{{\mathcal {MCP}}_{K,N,D}} \int_{\Omega} f^2 \mm \leq \int_{\conv(\Omega)} |\nabla_{X} f|^2 \mm ,
\]
where:
\begin{equation} \label{eq:intro-optimum}
\lambda_{{\mathcal {MCP}}_{K,N,D}} := \left\{\begin{array}{lll}  
\lambda[h_{K,N,D}] & \text{if}~~ K \leq 0 \\
\inf_{D' \in (0,\min(D,D_{K,N})]} \lambda[h_{K,N,D'}] & \text{if}~~ K > 0 \\
\end{array}
\right . ,
\end{equation}
and $h_{K,N,D}$ denotes the following one-dimensional $\MCP(K,N)$ density:
\[
h_{K,N,D} (x) := \left\{\begin{array}{lll}  
s^{N-1}_{K/(N-1)}(D-x) & \text{if}~~ x \in [0,D/2]  \\
s^{N-1}_{K/(N-1)}(x) & \text{if}~~ x \in [D/2,D] \\
\end{array}
\right .    .
\]
\end{theorem}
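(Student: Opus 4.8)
The plan is to combine the localization (needle decomposition) technique available in essentially non-branching $\MCP(K,N)$ spaces with $N<\infty$, a one-dimensional ODE comparison identifying the extremal density, and an elementary observation that lets us pass from $\Omega$ to $\conv(\Omega)$ at no cost. We may assume the right-hand side is finite. Since $\Omega$ is bounded and $\MCP(K,N)$ with $N<\infty$ forces bounded sets to have finite measure, $u:=f\mathbf{1}_\Omega$ lies in $L^1(\mm)$ with $\int_X u\,\mm=0$. Applying the disintegration theorem associated to $L^1$-optimal transport to $u$ (cf.\ \cite{CavallettiEMilman-LocalToGlobal,CS18} and the references therein) produces an $\mm$-measurable partition $X=\mathcal Z\cup\mathcal T$ with $u=0$ $\mm$-a.e.\ on $\mathcal Z$, and a disintegration $\mm\restr{\mathcal T}=\int_Q\mm_q\,\q(dq)$ in which, for $\q$-a.e.\ $q$, one has $\mm_q=h_q\,\mathcal H^1\restr{X_q}$ with $X_q$ the image of a unit-speed minimizing geodesic, $h_q$ a one-dimensional $\MCP(K,N)$ density on the parametrizing interval, $\int_{X_q}u\,\d\mm_q=0$, and the needles $\{X_q\}$ pairwise disjoint up to $\mm$-null sets. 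Consequently $\int_\Omega f^2\,\mm=\int_X u^2\,\mm=\int_Q\bigl(\int_{X_q}u^2\,\d\mm_q\bigr)\,\q(dq)$, so it suffices to prove a ``needle inequality'' for $\q$-a.e.\ $q$ and integrate.

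The new point is the reduction of each needle to a short geodesic sub-segment living inside $\conv(\Omega)$. Fix $q$ with $\int_{X_q}u^2\,\d\mm_q>0$ and let $[\alpha_q,\beta_q]$ be the smallest closed sub-interval of the parametrizing interval of $X_q$ outside of which $u\circ X_q$ vanishes $\mathcal H^1$-a.e.\ Since $u=f\mathbf{1}_\Omega$ and $\Omega$ is closed, $\{u\circ X_q\ne 0\}$ accumulates at $\alpha_q$ and $\beta_q$, so $X_q(\alpha_q),X_q(\beta_q)\in\Omega$; as $X_q$ is a minimizing geodesic, $X_q([\alpha_q,\beta_q])$ is a geodesic joining two points of $\Omega$, whence $X_q([\alpha_q,\beta_q])\subseteq\conv(\Omega)$ and $\beta_q-\alpha_q\le\diam(\Omega)\le D$. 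Put $g_q:=f\circ X_q$ (Lipschitz, with $|g_q'|\le|\nabla_X f|\circ X_q$ $\mathcal H^1$-a.e.\ because $X_q$ has unit speed) and $A_q:=\{t\in[\alpha_q,\beta_q]:X_q(t)\in\Omega\}$; then $\int_{A_q}g_q\,\d\mm_q=\int_{X_q}u\,\d\mm_q=0$ and $\int_{A_q}g_q^2\,\d\mm_q=\int_{X_q}u^2\,\d\mm_q$. The restriction of $h_q$ to $[\alpha_q,\beta_q]$ is again a one-dimensional $\MCP(K,N)$ density (a property stable under restriction to sub-intervals) on an interval of length $\le D$, so by the one-dimensional comparison stated below its Poincar\'e constant is $\ge\lambda_{\mathcal{MCP}_{K,N,D}}$. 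Letting $c$ be the $\mm_q$-weighted mean of $g_q$ over $[\alpha_q,\beta_q]$, the vanishing $\int_{A_q}g_q\,\d\mm_q=0$ yields $\int_{A_q}g_q^2\,\d\mm_q\le\int_{A_q}(g_q-c)^2\,\d\mm_q\le\int_{[\alpha_q,\beta_q]}(g_q-c)^2\,\d\mm_q$, and the one-dimensional Poincar\'e inequality on $[\alpha_q,\beta_q]$ bounds this by $\lambda_{\mathcal{MCP}_{K,N,D}}^{-1}\int_{[\alpha_q,\beta_q]}(g_q')^2\,\d\mm_q\le\lambda_{\mathcal{MCP}_{K,N,D}}^{-1}\int_{X_q([\alpha_q,\beta_q])}|\nabla_X f|^2\,\d\mm_q$. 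This is the needle inequality, with its right-hand side supported on $X_q([\alpha_q,\beta_q])\subseteq\conv(\Omega)$. Summing over $q$ and using disjointness of needles (so that $E:=\bigcup_q X_q([\alpha_q,\beta_q])$ is $\mm$-measurable with $\mm\restr E=\int_Q\mm_q\restr E\,\q(dq)$) gives $\lambda_{\mathcal{MCP}_{K,N,D}}\int_\Omega f^2\,\mm\le\int_E|\nabla_X f|^2\,\mm\le\int_{\conv(\Omega)}|\nabla_X f|^2\,\mm$.

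The substantial part is thus the one-dimensional comparison: \emph{every one-dimensional $\MCP(K,N)$ density $\tilde h$ supported on an interval of length $\le D$ has Poincar\'e constant $\lambda[\tilde h]\ge\lambda_{\mathcal{MCP}_{K,N,D}}$}, with sharpness coming from the explicit densities $h_{K,N,D'}$, which one checks directly are $\MCP(K,N)$ densities on $[0,D']$. I would prove this by an ODE comparison in the spirit of Kr\"oger \cite{Kroger-GradientComparison} and Bakry--Qian \cite{BakryQianSharpSpectralGapEstimatesUnderCDD}, adapted to the fact that $\MCP(K,N)$ encodes only a \emph{one-sided} curvature-dimension bound. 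The first nonzero Neumann eigenfunction $\psi$ of the weighted Sturm--Liouville problem $(\tilde h\psi')'+\lambda\tilde h\psi=0$ on $\supp\tilde h$ is monotone with a single interior zero; one exploits the defining differential inequality of a one-dimensional $\MCP(K,N)$ density — namely that, on either side of its essential maximum, $\tilde h^{1/(N-1)}$ is dominated by the appropriate multiple of $s_{K/(N-1)}$ — to compare its Rayleigh quotient with that of the symmetric model $h_{K,N,\ell}$ whose profile saturates both of these bounds (so that its mass is pushed to the two endpoints, heuristically the worst configuration for the spectral gap), concluding that $\lambda[h_{K,N,\ell}]$ is the least possible. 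It then remains to analyze the one-parameter family $D'\mapsto\lambda[h_{K,N,D'}]$: for $K\le 0$ a monotonicity argument puts the infimum over admissible $D'$ at $D'=D$, giving $\lambda[h_{K,N,D}]$, while for $K>0$ the Bonnet--Myers bound confines $D'$ to $(0,\min(D,D_{K,N})]$ and the infimum over this range is $\lambda_{\mathcal{MCP}_{K,N,D}}$. The main obstacle is precisely this one-dimensional step: unlike in the $\CD(K,N)$ setting of \cite{Calderon18}, no extremal-point description of one-dimensional $\MCP$ measures is available, so the worst density must be isolated directly from the ODE.
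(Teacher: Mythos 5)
Your overall architecture coincides with the paper's: localize $u=f\,1_\Omega$ via the needle decomposition for essentially non-branching $\MCP(K,N)$ spaces, pass on each needle to the smallest sub-segment carrying $u$ (whose endpoints lie in the closed set $\Omega$, so that the segment lies in $\conv(\Omega)$ and has length at most $D$), and neutralize the fact that the mean-zero condition holds only on the $\Omega$-part of that segment by recentering: your inequality $\int_{A_q}g_q^2\,\d\mm_q\le\int_{A_q}(g_q-c)^2\,\d\mm_q$, valid for every constant $c$ because $\int_{A_q}g_q\,\d\mm_q=0$, is exactly the paper's Cauchy--Schwarz step in variational form, and this half of your argument is correct. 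A minor blemish is the final summation: the union $E=\bigcup_q X_q([\alpha_q,\beta_q])$ need not be measurable without a selection argument, but this is immaterial --- simply bound each needle term by $\int_{X_q\cap\conv(\Omega)}|\nabla_X f|^2\,\d\mm_q$ and integrate in $q$, as the paper does with $\conv(\supp(g\mm))$.

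The genuine gap is the one-dimensional comparison $\lambda[\tilde h]\ge\lambda_{\mathcal{MCP}_{K,N,D}}$, which you yourself flag as the substantial part but leave at the level of a strategy, and the mechanism you sketch does not work as stated. A one-sided pointwise domination of $\tilde h^{1/(N-1)}$ by multiples of $s_{K/(N-1)}$ followed by a ``Rayleigh quotient comparison'' cannot bound the spectral gap from below: the density enters both numerator and denominator of the Rayleigh quotient, so pointwise comparison of densities only yields two-sided, oscillation-dependent bounds as in Lemma \ref{lem:HS}, which are far from sharp. What is actually needed is the two-sided consequence of the characterization (\ref{eq1.1}), namely $(\log h)'\ge(\log h_{K,N,D})'$ on $[0,D/2)$ and $\le$ on $(D/2,D]$ (Corollary \ref{cor:logLipschitz}), fed into a genuine Sturm--Liouville argument (Lemma \ref{lem:ODE}): locate the unique zero of the first Neumann eigenfunction of the model at the midpoint (symmetry plus simplicity of the eigenvalue), split at the zero $\xi$ of the eigenfunction of $h$, use the sign of $u'$ on the relevant side to convert the drift comparison into the differential inequality $-\Delta_{\Psi_0}u\le\lambda_0 u$, and contradict strict domain monotonicity of the mixed Dirichlet--Neumann eigenvalue. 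Moreover, since an $\MCP$ density is merely locally Lipschitz and may vanish at the endpoints, one must first mollify on $[\delta,D-\delta]$ and then let $\delta\to0$, using stability of the Poincar\'e constant under uniform convergence of density ratios and under exhaustion of the interval; several published approximation shortcuts here are erroneous, which is why the paper spells this out. Finally, reducing from ``diameter exactly $D'$'' to ``diameter at most $D$'' when $K\le0$ requires proving that $D'\mapsto\lambda[h_{K,N,D'}]$ is non-increasing --- done in the paper by rescaling, since $h_{K,N,D'}((D'/D)x)$ is an $\MCP((D'/D)^2K,N)$, hence $\MCP(K,N)$, density on $[0,D]$ --- while for $K>0$ the infimum over $D'\in(0,\min(D,D_{K,N})]$ genuinely cannot be replaced by $\lambda[h_{K,N,D}]$ because this monotonicity fails; you assert both facts but do not establish them.
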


\medskip

The above Poincar\'e constant is clearly best possible for the class of subsets $\Omega$ of $\MCP(K,N)$ spaces with $\diam(\Omega) \leq D$, as witnessed by the one-dimensional $\MCP(K,N)$ spaces $([0,D'],|\cdot|,h_{K,N,D'} \mathcal{L}^1)$ and $\Omega = [0,D']$ (with $D'=D$ when $K \leq 0$ and $D' \in (0,\min(D,D_{K,N})]$ when $K  >0$). The difference between the cases $K \leq 0$ and $K > 0$ was already observed in \cite{CS18} in the isoperimetric context; in Section \ref{sec:estimates}, we demonstrate that this is \textbf{not} an artifact of the proof, but rather a consequence of the fact that $(0,D_{K,N}] \ni D' \mapsto \lambda[h_{K,N,D'}]$ is \textbf{not} monotone non-increasing when $K > 0$. We also obtain various concrete estimates on $\lambda_{{\mathcal {MCP}}_{K,N,D}}$; in particular:
\[
\lambda_{{\mathcal {MCP}}_{K,N,D}} \geq \left\{\begin{array}{lll}  
 \frac{1}{4} \frac{N^2}{D^2} \frac{1}{2^{N-1}} & \text{if}~~ K \geq 0 \\
 \frac{1}{4} \max\brac{|K|(N-1), \frac{N^2}{D^2}} \brac{\frac{\sinh(\sqrt{\frac{-K}{N-1}} \frac{D}{2})}{\sinh (\sqrt{\frac{-K}{N-1}} D)}}^{N-1} 
   & \text{if}~~ K < 0 \\
\end{array}
\right . .
\]

\medskip

We stress again that by the results of \cite{Juillet09,FigalliRifford10,Rizzi-MCPonCarnot,BarilariRizzi-MCPonHTypeCarnot,BarilariRizzi18}, Theorem \ref{thm:main} applies to the ideal sub-Riemannian setting. We illustrate this here for the simplest example of geodesic balls in the Heisenberg group $\H^n$ (equipped with the Carnot-Carath\'eodory metric $\d$ and Lebesgue measure $\mathcal{L}^{2n+1}$, which satisfies $\MCP(0,2n+3)$ by \cite{Juillet09}):
\begin{corollary}
For any Lipschitz function $f : (\H^n,\d) \rightarrow \R$, $x_0 \in \H^n$ and $r > 0$:
\[
\int_{B_r(x_0)} f \mathcal{L}^{2n+1} = 0  \;\; \Rightarrow \;\;  \frac{1}{4} \frac{(2n+3)^2}{(2r)^2} \frac{1}{2^{2n+2}} \int_{B_r(x_0)} f^2  \mathcal{L}^{2n+1}\leq \int_{B_{2r}(x_0)} |\nabla_{\H^n} f|^2 \mathcal{L}^{2n+1}  .
\]
\end{corollary}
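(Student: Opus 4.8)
The plan is to obtain the corollary as an immediate specialization of Theorem~\ref{thm:main}. First I would record the two structural facts that make the theorem applicable: by Juillet \cite{Juillet09} the metric measure space $(\H^n,\d,\mathcal{L}^{2n+1})$ satisfies $\MCP(0,2n+3)$, and by Figalli--Rifford \cite{FigalliRifford10} it is essentially non-branching (being an ideal sub-Riemannian manifold); moreover $\supp(\mathcal{L}^{2n+1})=\H^n$, so every geodesic ball lies in the support. This places us squarely within the hypotheses of Theorem~\ref{thm:main} with $K=0$ and $N=2n+3$.

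Next I would fix the domain. Given $x_0\in\H^n$ and $r>0$, set $\Omega:=\overline{B_r(x_0)}$, which is a closed subset of $\supp(\mathcal{L}^{2n+1})$ with $\diam(\Omega)\le 2r=:D<\infty$ by the triangle inequality. The same triangle inequality gives $\conv(\Omega)\subset \overline{B_{2r}(x_0)}$, exactly as noted in the introduction for geodesic balls. Since $\partial B_r(x_0)$ and $\partial B_{2r}(x_0)$ are $\mathcal{L}^{2n+1}$-null, all integrals over $\Omega$ (resp. over $\conv(\Omega)$) may be replaced by integrals over the open ball $B_r(x_0)$ (resp. $B_{2r}(x_0)$). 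Finally, a globally Lipschitz $f:(\H^n,\d)\to\R$ is in particular locally Lipschitz, and its horizontal gradient norm $|\nabla_{\H^n}f|$ agrees $\mathcal{L}^{2n+1}$-a.e. with the metric local Lipschitz constant $|\nabla_X f|$ appearing in Theorem~\ref{thm:main}.

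Then I would simply invoke Theorem~\ref{thm:main} with these data: whenever $\int_{B_r(x_0)} f\,\mathcal{L}^{2n+1}=0$,
\[
\lambda_{\mathcal{MCP}_{0,2n+3,2r}}\int_{B_r(x_0)} f^2\,\mathcal{L}^{2n+1}\;\le\;\int_{\conv(\Omega)}|\nabla_{\H^n}f|^2\,\mathcal{L}^{2n+1}\;\le\;\int_{B_{2r}(x_0)}|\nabla_{\H^n}f|^2\,\mathcal{L}^{2n+1}.
\]
To conclude, I would lower bound the Poincar\'e constant using the explicit estimate displayed after Theorem~\ref{thm:main} in the case $K\ge 0$, namely $\lambda_{\mathcal{MCP}_{K,N,D}}\ge \tfrac14\,\tfrac{N^2}{D^2}\,\tfrac{1}{2^{N-1}}$, which with $N=2n+3$ and $D=2r$ is precisely $\tfrac14\,\tfrac{(2n+3)^2}{(2r)^2}\,\tfrac{1}{2^{2n+2}}$, giving the claim.

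There is no genuine obstacle here: the entire content is carried by Theorem~\ref{thm:main} and the accompanying explicit estimate. The only points needing a word of care are the passage from the closed ball (required by the ``closed subset'' hypothesis) to the open ball via nullity of the bounding spheres, the inclusion $\conv(B_r(x_0))\subset B_{2r}(x_0)$ coming from the triangle inequality, and the a.e.\ identification of the horizontal gradient with the metric slope for Lipschitz functions on $(\H^n,\d)$ — none of which presents any difficulty.
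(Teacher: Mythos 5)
Your proposal is correct and follows exactly the route the paper intends: apply Theorem \ref{thm:main} with $K=0$, $N=2n+3$, $\Omega$ the (closed) ball of radius $r$, $D=2r$, use $\conv(B_r(x_0))\subset B_{2r}(x_0)$, and insert the explicit lower bound $\lambda_{\mathcal{MCP}_{K,N,D}}\ge\frac14\frac{N^2}{D^2}2^{-(N-1)}$ for $K\ge 0$. The additional care you take with the closed-versus-open ball and the a.e.\ identification of the horizontal gradient with the metric slope is appropriate and does not change the argument.
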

\noindent While the validity of a local Poincar\'e inequality on $\H^n$ is well-known (even in tight form, where $B_{2r}$ on the right-hand-side is replaced by $B_r$), starting from the work of D.~Jerison on vector fields satisfying H\"{o}rmander's condition \cite{Jerison-HormanderCondition} (see also \cite{DongLuSun} and the references therein), we are not aware of  any explicit constants in these inequalities. Note that by \cite{SobolevMetPoincare}, it is always possible to tighten a Poincar\'e inequality on any geodesic space, but this would result in somewhat of a loss of explicit constants. 

\bigskip
The rest of this work is organized as follows. In Section \ref{sec:prelim} we recall some preliminaries on metric measure spaces. In Section \ref{sec:ODE}, we derive our basic ODE comparison principle. In Section \ref{sec:1D}, we apply the comparison principle to one-dimensional $\MCP(K,N)$ densities and identify the extremal model densities $h_{K,N,D}$. In Section \ref{sec:estimates}, we derive various estimates on $\lambda[h_{K,N,D}]$ as a function of the parameters $K$, $N$ and $D$. In Section \ref{sec:proof}, we prove Theorem \ref{thm:main}. In Section \ref{sec:conclude}, we compare to some previously known results pertaining to Poincar\'e inequalities on $\MCP(K,N)$ spaces due to Sturm, von Renesse and others. In a subsequent work \cite{Han-MCP-pPoinare}, the results of this work will be extended to $p$-Poincar\'e inequalities along with corresponding rigidity results for cases of equality.

\bigskip
\noindent \textbf{Acknowledgments}.  We thank the referee for carefully reading the manuscript and providing helpful comments. 

\section{Preliminaries on Metric Measure Spaces} \label{sec:prelim}

Let $(X, \d)$ be a complete separable metric space endowed with a locally finite Borel measure $\mm$ -- 
such triplets $(X,\d,\mm)$ will be called metric measure spaces. We refer to \cite{AG-U,AGS-G,Gromov,VillaniTopicsInOptimalTransport, VillaniOldAndNew} for background on metric measure spaces in general, and the theory of optimal transport on such spaces in particular. 

We denote by $\geo(X,\d)$ the set of all closed directed constant-speed geodesics parametrized on the interval $[0,1]$. 
We regard $\geo(X,\d)$ as a subset of all Lipschitz maps $\text{Lip}([0,1], X)$ endowed with the uniform topology. 
Recall that $(X,\d)$ is called a \emph{geodesic} metric space (or simply geodesic) if for any $x,y \in X$ there exists $\gamma \in \geo(X,\d)$ with $\gamma_0 = x$ and $\gamma_1 = y$. 
Given a subset $A$ of a geodesic space $(X,\d)$, we denote by $\conv(A)$ the geodesic hull of $A$, namely:
\[
\conv(A) := \cup_{\{ \gamma \in \geo(X,\d) \; ; \; \gamma_0,\gamma_1 \in A \}}  \gamma \; ;
\]
note that $\conv(A)$ need not be a geodesic space itself. 

The space of all Borel probability measures on $(X,\d)$ is denoted by $\mathcal{P}(X)$. It is naturally equipped with its weak topology, in duality with bounded continuous functions $C_b(X)$ over $X$.  
The subspace of those measures having finite second moment will be denoted by $\mathcal{P}_{2}(X)$. 
The weak topology on $\mathcal{P}_{2}(X)$ is metrized by the $L^{2}$-Wasserstein distance $W_{2}$, defined as follows for any $\mu_0,\mu_1 \in \mathcal{P}(X)$:
\begin{equation}\label{eq:Wdef}
  W_2^2(\mu_0,\mu_1) := \inf_{ \pi} \int_{X\times X} \d^2(x,y) \, \pi(dx , dy),
\end{equation}
where the infimum is taken over all $\pi \in \mathcal{P}(X \times X)$ having $\mu_0$ and $\mu_1$ as the first and the second marginals, respectively.
It is known that the infimum in (\ref{eq:Wdef}) is always attained for any $\mu_0,\mu_1 \in \mathcal{P}(X)$. When $\mu_0,\mu_1 \in \mathcal{P}_2(X)$ then 
this minimum is necessarily finite, and a transference plan realizing it is called an optimal transference plan between $\mu_0$ and $\mu_1$. 

As $(X,\d)$ is a complete and separable metric space then so is $(\mathcal{P}_2(X), W_2)$. 
Under these assumptions, it is known that $(X,\d)$ is geodesic if and only if $(\mathcal{P}_2(X), W_2)$ is geodesic. Given $t \in [0,1]$, let $e_t$ denote the evaluation map: 
\[
  e_{t} : \geo(X,\d) \ni \gamma\mapsto \gamma_t \in X.
\]
A measure $\Pi \in  \mathcal{P}(\geo(X,\d))$ is called an optimal dynamical plan if $(e_0,e_1)_{\sharp} \Pi$ is an optimal transference plan; it easily follows in that case that
$[0,1] \ni t \mapsto (e_t)_\sharp \nu$ is a geodesic in $(\mathcal{P}_2(X), W_2)$. It is known that any geodesic $(\mu_t)_{t \in [0,1]}$ in $(\mathcal{P}_2(X), W_2)$ can be lifted to an optimal dynamical plan $\Pi$ so that $(e_t)_\sharp \Pi  = \mu_t$ for all $t \in [0,1]$ (c.f. \cite[Theorem 2.10]{AG-U}). We denote by $\Opt(\mu_{0},\mu_{1})$ the space of all optimal dynamical plans $\Pi$ so that $(e_i)_\sharp \Pi = \mu_i$, $i=0,1$.

\subsection{Essentially Non-Branching Spaces}

We say that a subset $\Gamma \subset  \geo(X,\d)$  is non-branching  if  for any $\gamma^1, \gamma^2 \in \Gamma$, it holds:
\[
\exists t \in (0,1) ~~\text{s.t.}~ ~\forall  s\in [0, t]~ \gamma_s^1 =\gamma_s^2 \Rightarrow  \forall s \in [0,1] \;\; \gamma_s^1 =\gamma_s^2 . 
\]
We say that $\ms$ is  essentially non-branching \cite{RS-N} if for any $\mu_0, \mu_1 \ll \mm$ in $\mathcal{P}_2(X)$, any $\Pi \in \Opt(\mu_{0},\mu_{1})$  is  concentrated on a Borel non-branching subset of geodesics. 

\smallskip
The restriction to essentially non-branching spaces is natural and facilitates 
avoiding pathological cases: as an example of possible pathological behaviour we mention the
failure of the local-to-global property of $\cdkn$ within this class of spaces; in particular,
a heavily-branching metric measure space verifying a local version of $\CD(0, 4)$ which does not verify $\cdkn$ for any
fixed $K \in \R$ and $N \in [1,\infty]$ was constructed by Rajala in \cite{RajalaFailureOfLocalToGlobal}, while the local-to-global
property of $\cdkn$ has been recently verified in \cite{CavallettiEMilman-LocalToGlobal} for essentially non-branching metric measure spaces. 

\smallskip

It is clear that if $(X, \d)$ is a smooth complete Riemannian manifold $(M,\g)$ (with its induced geodesic distance) then $\geo(X,\d)$ is non-branching, and so in particular $(M,\g,\mm)$ is essentially non-branching for any measure $\mm$. In addition, very general complete sub-Riemannian manifolds $(M,\Delta,\g)$
equipped with their volume measure $\mm$ are also  essentially non-branching  (see Figalli and Rifford \cite[Section 4]{FigalliRifford10}), as follows from the existence and uniqueness of the optimal transport map on such spaces \cite[Theorem 3.3 and Section 3.4]{FigalliRifford10}; for instance, this holds for all ideal sub-Riemannian structures,
that is admitting no non-trivial abnormal minimizing geodesics \cite[Theorem 5.9]{FigalliRifford10}.

\subsection{$\MCP(K,N)$}

As already mentioned in the Introduction, the Measure Contraction Property $\MCP(K,N)$ was introduced by Ohta \cite{Ohta-MCP} and Sturm \cite{SturmCD2} as a weaker variant of the $\cdkn$ condition. On general metric measure spaces the two definitions slightly differ, but on essentially non-branching spaces they coincide, and so we use the simplest definition to state. 

Recall the definition of the function $s_\kappa$ from (\ref{eq:skappa}), and the Bonnet-Myers upper bound $D_{K,N}$ from (\ref{eq:DKN}). 
Given $K \in \R$ and $N \in (1,\infty)$, we set for $(t, \theta) \in [0, 1] \times \R^+$, 
\begin{equation} \label{eq:sigma}
 \sigma^{(t)}_{K, N-1}   \big(\theta):=\left\{\begin{array}{llll}
+\infty &\text{if}~~ \theta \geq D_{K,N}, \\ \frac{s_{K/(N-1)}(t \theta)}{s_{K/(N-1)}(\theta)} &\text{otherwise},\\
\end{array}
\right.
\end{equation}
and
\[
 \tau^{(t)}_{K, N}(\theta) :=t^{\frac 1N} \Big ( \sigma^{(t)}_{K, N-1}(\theta)     \Big )^{1-\frac1N}.
\]

\begin{definition}[Measure contraction property $\mcp$]
A metric measure space $(X,\d,\mm)$ is said to satisfy $\MCP(K,N)$ if for any $o \in \supp(\mm)$ and  $\mu_0 \in \mathcal{P}_2(X)$ of the form $\mu_0 = \frac{1}{\mm(A)} \mm\llcorner_{A}$ for some Borel set $A \subset X$ 
with $0 < \mm(A) < \infty$,
there exists $\Pi \in \Opt(\mu_0, \delta_{o} )$ such that:
\begin{equation} \label{eq-MCP}
\frac{1}{\mm(A)} \mm \geq (e_{t})_{\sharp} \big( \tau_{K,N}^{(1-t)}(\d(\gamma_{0},\gamma_{1}))^{N} \Pi(d \gamma) \big) \;\;\; \forall t \in [0,1] .
\end{equation}
\end{definition}

From \cite[Proposition 9.1 $(i) \Leftrightarrow (iv)$]{CavallettiEMilman-LocalToGlobal}, an equivalent definition is to require the existence of $\Pi \in \Opt(\mu_0, \delta_{o})$ so that $\mu_t := (e_t)_{\#} \Pi \ll \mm$ for all $t \in [0,1)$, and that writing $\mu_t = \rho_t \mm$, we have for all $t \in [0,1)$:
\[ 
\frac{1}{\rho_t(\gamma_t)} \geq \tau_{K,N}^{(1-t)}(\d(\gamma_0,\gamma_1))^N \mm(A) \;\;\; \text{for $\Pi$-a.e. $\gamma \in \geo(X,\d)$} .
\]
On an essentially non-branching space satisfying $\MCP(K,N)$, it follows from the results of \cite{CavallettiMondino-ENB-MCP} that the above $\Pi$ is unique and is induced by a map (i.e. $\Pi = S_{\sharp}(\mu_0)$ for some map $S: X \rightarrow \geo(X,\d)$). 

If $(X,\d,\mm)$ satisfies $\MCP(K,N)$ with $N \in (1,\infty)$ then $(\supp(\mm),\d)$ is proper and geodesic (e.g. \cite{Ohta-MCP,CavallettiEMilman-LocalToGlobal}). Furthermore, it was shown in \cite{Ohta-MCP,SturmCD2} that when $K > 0$, the following (sharp) Bonnet--Myers diameter bound holds:
\begin{equation} \label{eq:BM}
{\diam(\supp \mm)} \leq D_{K,N} ;
\end{equation}
we remark that while this is obvious from our present definition and the fact that $\tau_{K,N}(\theta) = +\infty$ if $\theta \geq D_{K,N}$, the above bound was shown in \cite{Ohta-MCP} under an a-priori weaker (but ultimately equivalent) definition of $\MCP(K,N)$ where $A$ is assumed to be a subset of $B(o, D_{K,N})$ and in addition $(\supp(\mm) , \d)$ is a-priori assumed to be a length-space.

\subsection{Localization}

Recall that given a measure space $(X,\mathscr{X},\mm)$, a set $A \subset X$ is called $\mm$-measurable if $A$ belongs to the completion of the $\sigma$-algebra $\mathscr{X}$, generated by adding to it all subsets of null $\mm$-sets; similarly, a function $f : (X,\mathscr{X},\mm) \rightarrow \R$ is called $\mm$-measurable if all of its sub-level sets are $\mm$-measurable. We denote by $\mathcal{M}(X,\mathscr{X})$ the collection of measures on $(X,\mathscr{X})$. $\mm$ is said to be concentrated on $A \subset X$ if $\exists B \subset A$ with $B \in \mathscr{X}$ so that $\mm(X \setminus B) = 0$.

\begin{definition}[Disintegration on sets] \label{def:disintegration}
\label{defi:dis}
Let $(X,\mathscr{X},\mm)$ denote a measure space. 
Given any family $\{X_q\}_{q \in Q}$ of subsets of $X$, a \emph{disintegration of $\mm$ on $\{X_q\}_{q \in Q}$} is a measure-space structure 
$(Q,\mathscr{Q},\qq)$ and a map
\[
Q \ni q \longmapsto \mm_{q} \in \mathcal{M}(X,\mathscr{X})
\]
so that:
\begin{itemize}
\item For $\qq$-a.e. $q \in Q$, $\mm_q$ is concentrated on $X_q$.
\item For all $B \in \mathscr{X}$, the map $q \mapsto \mm_{q}(B)$ is $\qq$-measurable.
\item For all $B \in \mathscr{X}$, $\mm(B) = \int_Q \mm_{q}(B)\, \qq(dq)$; this is abbreviated by  $\mm = \int_Q \mm_{q} \qq(dq)$.
\end{itemize}
\end{definition}

\begin{theorem}[Localization for $\mcp$ spaces]\label{thm:localization}
Let $\ms$ be an essentially non-branching metric measure space
satisfying the $\mcp$ condition for some $K\in \R$ and $N \in (1, \infty)$. Let $g : X \rightarrow \R$ be $\mm$-integrable with $\int_X g \mm = 0$ and $\int_X|g(x)| \d(x,x_0) \mm(dx) < \infty$ for some (equivalently, all) $x_0 \in X$. Then there exists an $\mm$-measurable subset $\mathsf T \subset X$ and a family $\{X_{q}\}_{q \in Q} \subset X$, such that: 
\begin{enumerate}
\item There exists a disintegration of $\mm\restr{\mathsf{T}}$ on $\{X_{q}\}_{q \in Q}$:
\[
\mm\restr{\mathsf{T}} = \int_{Q} \mm_{q} \, \qq(dq)  ~,~ \qq(Q) = 1 . 
\]
\item For $\qq$-a.e. $q \in Q$, $X_q$ is a closed geodesic in $(X,\d)$.
\item For $\qq$-a.e. $q \in Q$, $\mm_q$ is a Radon measure supported on $X_q$ with $\mm_q \ll  \mathcal H^1 \restr{X_q}$.
\item For $\qq$-a.e. $q \in Q$, the metric measure space $(X_{q}, \d,\mm_{q})$ verifies $\MCP(K,N)$.
\item For $\qq$-a.e. $q \in Q$, $\int g \mm_q = 0$, and $g \equiv 0$  $\mm$-a.e. on $X \setminus \mathsf T$. 
\end{enumerate}
\end{theorem}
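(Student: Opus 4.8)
The plan is to run the by-now standard $L^1$-Optimal-Transport (``needle'') decomposition associated to $g$, following Klartag \cite{KlartagLocalizationOnManifolds} and Cavalletti--Mondino \cite{CavallettiMondino-Localization}, in the form adapted to $\mcp$ spaces in \cite{BianchiniCavalletti-MongeProblem,Cavalletti-MongeForRCD,CavallettiEMilman-LocalToGlobal,CM-Laplacian,CavallettiMondino-ENB-MCP}, and then to verify separately the two conditions in item (5), which encode the specific feature of the decomposition tailored to our $g$. First I would set $\mu_+ := g_+ \mm$ and $\mu_- := g_- \mm$; since $\int_X g\, \mm = 0$ these are finite measures of equal mass, and the hypothesis $\int_X |g(x)| \d(x,x_0)\,\mm(dx) < \infty$ guarantees that the $L^1$-transport cost $W_1(\mu_+,\mu_-)$ is finite. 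Let $u : X \to \R$ be a Kantorovich potential for this $W_1$-problem, i.e. a $1$-Lipschitz function with $u(x)-u(y) = \d(x,y)$ on the support of an optimal plan.

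Associated to $u$ one has the transport relation, the transport set $\mathsf{T}_u$, and the partition of $\mathsf{T}_u$ into maximal transport rays. Invoking the structure theory for essentially non-branching $\mcp$ spaces (uniqueness of optimal dynamical plans from \cite{CavallettiMondino-ENB-MCP}, together with the $\mcp$-localization results of \cite{CavallettiEMilman-LocalToGlobal,CM-Laplacian}), the set of branching and end points of rays is $\mm$-negligible, so after removing it we obtain an $\mm$-measurable transport set $\mathsf{T}$, a measurable quotient $(Q,\mathscr{Q},\qq)$ with $\qq(Q)=1$, a Borel ``ray map'' $q \mapsto X_q$ whose images are pairwise disjoint closed geodesics covering $\mathsf{T}$, and a disintegration $\mm\restr{\mathsf{T}} = \int_Q \mm_q\, \qq(dq)$ with $\mm_q$ concentrated on $X_q$; this yields items (1) and (2). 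For the ``curvature localizes'' part, for $\qq$-a.e.\ $q$ the conditional $\mm_q$ is a Radon measure with $\mm_q \ll \mathcal{H}^1\restr{X_q}$, and, parametrizing $X_q$ by arc length, its density satisfies the one-dimensional $\MCP(K,N)$ condition, i.e.\ $(X_q,\d,\mm_q)$ verifies $\MCP(K,N)$. This is exactly the content of the $\mcp$-disintegration theorems in \cite{CavallettiEMilman-LocalToGlobal,CM-Laplacian} (modeled on \cite{CavallettiMondino-Localization} for $\cdkn$), and gives items (3) and (4). I would cite these directly; reproving them would mean redeveloping the entire essentially-non-branching ray-map apparatus.

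It remains to establish item (5). Points outside $\mathsf{T}_u$ are neither sources nor sinks of the optimal transport, hence $g_+ = g_- = 0$ $\mm$-a.e.\ on $X \setminus \mathsf{T}$, that is $g \equiv 0$ there. For the mean-zero property, the transport along each ray $X_q$ is monotone (carrying the region $\{g>0\}$ to the region $\{g<0\}$ in the order induced by $u$), and a mass-balance argument along the disintegration -- precisely the ``$g$-balanced'' needle decomposition of \cite{KlartagLocalizationOnManifolds,CavallettiMondino-Localization} -- shows $\int g\, \mm_q = 0$ for $\qq$-a.e.\ $q$.

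The main obstacle is the curvature step: making precise that the contraction inequality \eqref{eq-MCP}, a global statement about $(X,\d,\mm)$, descends to each one-dimensional conditional $\mm_q$. This rests on bi-measurability of the ray map, the $\mm$-negligibility of branch points in essentially non-branching $\mcp$ spaces, and a change of variables along rays, all of which are imported from \cite{CavallettiMondino-ENB-MCP,CavallettiEMilman-LocalToGlobal,CM-Laplacian}. Granting those, the remaining verifications -- finiteness of $W_1$, the support condition, and the mean-zero condition on rays -- are comparatively routine.
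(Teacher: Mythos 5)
Your proposal is correct and follows essentially the same route as the paper: the paper's proof likewise takes $u$ to be the Kantorovich potential for the $L^1$-transport between $g_+\mm$ and $g_-\mm$ and simply combines the $\MCP$-disintegration result of \cite{CM-Laplacian} (building on \cite{CavallettiEMilman-LocalToGlobal,Cavalletti-MongeForRCD,CavallettiMondino-ENB-MCP}) with the balancing argument from the proof of \cite[Theorem 5.1]{CavallettiMondino-Localization}, exactly as you outline for items (1)--(5). Your attribution of the individual ingredients and your identification of $\mathsf{T}$ as the transport set (up to an $\mm$-null set) match the paper's argument.
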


The localization paradigm on $\mcp$ spaces has its roots in the work of Bianchini and Cavalletti in the non-branching setting (c.f. \cite[Theorem 9.5]{BianchiniCavalletti-MongeProblem}), and was extended to essentially non-branching $\mcp$ spaces with $N < \infty$ and finite $\mm$ in \cite[Theorem 7.10 and Remark 9.2]{CavallettiEMilman-LocalToGlobal} (building upon \cite{Cavalletti-MongeForRCD}) and for general $\mm$ in \cite[Theorem 3.5]{CM-Laplacian}. The idea to use $L^1$-transport between the positive and negative parts $g_+ := \max(g,0)$ and $g_- := (-g)_+$ of the balanced function $g$ to ensure that it remains balanced along the localization is due to Klartag \cite{KlartagLocalizationOnManifolds} (see \cite{CavallettiMondino-Localization} for an adaptation to the metric measure space setting). 

\begin{proof}[Proof of Theorem \ref{thm:localization}]
Simply combine \cite[Theorem 3.5]{CM-Laplacian} with the proof of \cite[Theorem 5.1]{CavallettiMondino-Localization}. Up to modification on a $\mm$-null-set, the set $T$ is the transport set of the $1$-Lipschitz Kantorovich potential $u$ associated to the $L^1$-Optimal-Transport between $g_+ \mm$ and $g_- \mm$, which consists of geodesics $\{X_q\}$ on which the function $u$ is affine with slope $1$. 
 \end{proof}

\section{ODE Comparison Principle} \label{sec:ODE}

It is well known and easy to see (see Section \ref{sec:proof}) that the Localization Theorem reduces the study of the Poincar\'e constant on metric measure spaces satisfying $\MCP(K,N)$ to the one-dimensional case. To understand the one-dimensional setting, we observe in this section a simple comparison principle for ODEs. We refer to \cite{Zettl-SLP} for well-known facts from classical Sturm--Liouville theory. 

\medskip

Given a compact interval $I  = [a,b]\subset \R$, consider the density $\Psi = \exp(V)$ where $V$ is a continuous piecewise smooth function on $I$. 
Denote the weighted Laplacian $\Delta_\Psi$ acting on $f \in C^{\infty}(I)$ by:
\[
\Delta_\Psi f := f'' + V' f' . 
\]
Let $C_{*,\dagger}^{\infty}(I)$ denote the subset of $C^{\infty}(I)$ consisting of functions satisfying $*$-boundary condition at $a$ and $\dagger$-boundary condition at $b$; here $*,\dagger \in \{D , N\}$, $D$ stands for zero Dirichlet boundary condition and $N$ stands for zero Neumann boundary condition. It is well known that as an operator on $L^2(I,\Psi)$, $-\Delta_\Psi$ with domain $C_{*,\dagger}^{\infty}(I)$ is essentially self-adjoint and positive semi-definite for any $*,\dagger \in \{D , N\}$. Denoting the corresponding self-adjoint extension by $-\Delta^{*,\dagger}_\Psi$, it is also well-known that $-\Delta^{*,\dagger}_\Psi$ has discrete spectrum, consisting of an increasing sequence of simple (multiplicity one) non-negative eigenvalues $\{\Lambda_i\}$ tending to $+\infty$. The associated eigenfunctions $\{ u_i \}$ and their first derivatives are absolutely continuous on $I$, and are smooth in any open subset of $I$ where $\Psi$ is; they satisfy the corresponding boundary conditions and $u_i'' + V' u_i' = -\lambda_i u_i$ in the distributional sense on $I$. 

We denote by $\Lambda^{*,\dagger}(\Psi,I)$ the first non-zero eigenvalue of $-\Delta^{*,\dagger}_\Psi$; for $\{*,\dagger\} \in \{ \{N,N\} , \{D,N\} \}$, the associated eigenfunction is strictly monotone on $I$, and in particular, has a single zero in the interior of $I$ when $\{*,\dagger\} =  \{N,N\}$. 
By domain monotonicity (which for $\{*,\dagger\}  \neq\{ D,D\}$ is a particular feature of the one-dimensional setting), $\Lambda^{*,\dagger}(\Psi , [\xi,\eta])$ is a continuous function of $(\xi,\eta)$ in $a \leq \xi < \eta \leq b$, strictly decreasing as $\eta \nearrow$ or as $\xi \searrow$. 

\begin{lemma}[ODE Comparison Principle] \label{lem:ODE}
Assume $0 \in (a,b)$. Let $V_0$ be continuous on $[a,b]$, and smooth on $[a,0]$ and on $[0,b]$. Denote $\Psi_0 = \exp(V_0)$, and assume that the eigenfunction $u_0$ of $-\Delta^{N,N}_{\Psi_0}$ associated to $\Lambda^{N,N}(\Psi_0,[a,b])$ has its (unique) zero at $0$. 

Then for all $V \in C^{\infty}([a,b])$, if $V' \geq V_0'$ on $[a,0)$ and $V' \leq V_0'$ on $(0,b]$, then denoting $\Psi = \exp(V)$ we have:
\[
\Lambda^{N,N}(\Psi,[a,b]) \geq \Lambda^{N,N}(\Psi_0,[a,b]) . 
\]
\end{lemma}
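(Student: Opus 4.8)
The plan is to reduce the eigenvalue comparison to a comparison of the first-order ODEs satisfied by the ``slope'' of the respective ground-state eigenfunctions, using a Riccati-type substitution. Let $u_0$ be the $\Lambda^{N,N}(\Psi_0,[a,b])$-eigenfunction, normalized so that $u_0' > 0$ on $(a,b)$ (it is strictly monotone, so this is legitimate), and write $\Lambda_0 := \Lambda^{N,N}(\Psi_0,[a,b])$. Since $u_0$ vanishes only at $0$, on $[a,0)$ we have $u_0 < 0$ and on $(0,b]$ we have $u_0 > 0$. The key quantity is $w_0 := u_0'/u_0$, which solves the Riccati equation $w_0' = -\Lambda_0 - V_0' w_0 - w_0^2$ on each of the intervals $[a,0)$ and $(0,b]$, with $w_0(a) = w_0(b) = 0$ coming from the Neumann condition and $w_0 \to \pm\infty$ as one approaches $0$ from either side (a simple zero of $u_0$). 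The idea is that if $V' \geq V_0'$ on $[a,0)$ and $V' \leq V_0'$ on $(0,b]$, then the candidate eigenfunction for $\Psi$ at the same eigenvalue $\Lambda_0$ can be ``steered'' to still satisfy both Neumann conditions, which by the variational characterization (or by a Sturm-type oscillation/shooting argument) forces $\Lambda^{N,N}(\Psi,[a,b]) \geq \Lambda_0$.

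Concretely, I would argue by a shooting/monotonicity argument on each half-interval. Consider first $[0,b]$: let $v$ solve $v'' + V' v' = -\Lambda_0 v$ with $v(0) = 0$, $v'(0) = 1$, so $v > 0$, $v' > 0$ near $0$, and set $w := v'/v$, solving $w' = -\Lambda_0 - V' w - w^2$ with $w \to +\infty$ at $0^+$. Comparing the two Riccati equations on $(0,b]$: wherever $w_0$ and $w$ are both finite and $w \leq w_0$, the sign hypothesis $V' \leq V_0'$ combined with $w_0 > 0$ gives $-V' w \geq -V_0' w \geq -V_0' w_0$ when $w \geq 0$... — here one must be careful about the sign of $w$, but the standard trick is to compare $w$ with $w_0$ directly: the difference $\delta := w_0 - w$ satisfies a linear ODE $\delta' = -(V_0' + w_0 + w)\delta - (V'-V_0')w$, and using $V' - V_0' \leq 0$ together with control near the singular endpoint $0$ (where both blow up like $1/x$, so $\delta$ is bounded there) one shows $\delta \geq 0$ throughout $(0,b]$, hence $w(b) \leq w_0(b) = 0$, i.e. $v'(b) \leq 0$. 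A symmetric argument on $[a,0)$ (now using $V' \geq V_0'$ and $w_0 < 0$) gives the analogous endpoint inequality. Then a continuity/intermediate-value argument in the eigenvalue parameter: if $\Lambda^{N,N}(\Psi,[a,b])$ were strictly less than $\Lambda_0$, the genuine ground state for $\Psi$ would have no interior zero structure forcing oscillation, yet the shooting solution at level $\Lambda_0$ already ``overshoots'' the Neumann condition, contradicting the ordering of eigenvalues via Sturm oscillation theory; equivalently, one packages this as: the function which equals the two half-interval shooting solutions glued at $0$ is an admissible competitor showing $\Lambda_0$ lies at or below the bottom of an appropriate spectrum.

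Alternatively — and this may be cleaner to write — I would use the variational (Rayleigh quotient) formulation directly. Define the test function $\varphi := u_0 \cdot e^{(V_0 - V)/2}$ or, better, look for $\varphi$ of the form $\varphi = \psi(u_0)$; but the most robust route is the classical one: for a one-dimensional weighted Laplacian, $\Lambda^{N,N}(\Psi,[a,b]) = \inf\{ \int (\varphi')^2 \Psi \,/\, \int \varphi^2 \Psi : \int \varphi \Psi = 0\}$, and the infimum over $\varphi$ with a single prescribed sign change at $0$ equals $\min(\Lambda^{N,N}(\Psi,[a,0]), \Lambda^{N,N}(\Psi,[0,b]))$ only in degenerate cases, so instead I track the first Dirichlet-at-$0$/Neumann-at-endpoint eigenvalues. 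In fact the slickest statement: $\Lambda^{N,N}(\Psi_0,[a,b]) = \Lambda^{D,N}(\Psi_0,[0,b]) = \Lambda^{N,D}(\Psi_0,[a,0])$ because $u_0$ vanishes exactly at $0$ (its restrictions are the respective mixed-boundary ground states). Then it suffices to prove $\Lambda^{D,N}(\Psi,[0,b]) \geq \Lambda^{D,N}(\Psi_0,[0,b])$ and $\Lambda^{N,D}(\Psi,[a,0]) \geq \Lambda^{N,D}(\Psi_0,[a,0])$, and conclude $\Lambda^{N,N}(\Psi,[a,b]) \geq \min$ of these — no wait, one needs $\geq$ of the min, which requires a separate (easy) argument that the $\{N,N\}$ ground state on $[a,b]$ has \emph{some} interior zero, giving $\Lambda^{N,N}(\Psi,[a,b]) \geq \min_{c\in(a,b)} \max(\Lambda^{N,D}(\Psi,[a,c]), \Lambda^{D,N}(\Psi,[c,b]))$, and then monotonicity in $c$ plus the two half-interval comparisons finishes it. Each half-interval comparison $\Lambda^{D,N}(\Psi,[0,b]) \geq \Lambda^{D,N}(\Psi_0,[0,b])$ is exactly the Riccati/shooting comparison above, now with the singular endpoint replaced by the tame Dirichlet endpoint, so it is cleaner.

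\textbf{Main obstacle.} The delicate point is the comparison of the two Riccati solutions $w$ and $w_0$ near the point $0$ where both blow up (in the $\{N,N\}$-on-$[a,b]$ picture), respectively the correct handling of the gluing: one must ensure the half-interval splitting $\Lambda^{N,N}(\Psi_0,[a,b]) = \Lambda^{N,D}(\Psi_0,[a,0]) = \Lambda^{D,N}(\Psi_0,[0,b])$ and its inequality-version for $\Psi$, and verify the monotonicity of mixed-boundary eigenvalues in the interface point $c$ (stated in the text via domain monotonicity) applies in the generality of piecewise-smooth $V$. The sign bookkeeping in the Riccati comparison — making sure that $V' \leq V_0'$ and the sign of $w_0$ conspire in the right direction on each half — is routine but must be done carefully, and is where I expect to spend the most effort.
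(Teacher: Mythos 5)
Your overall skeleton --- splitting at $0$, proving the two mixed-boundary half-interval comparisons $\Lambda^{D,N}(\Psi,[0,b]) \geq \Lambda^{D,N}(\Psi_0,[0,b])$ and $\Lambda^{N,D}(\Psi,[a,0]) \geq \Lambda^{N,D}(\Psi_0,[a,0])$, and reassembling the Neumann eigenvalue via the monotone dependence of the mixed eigenvalues on the interface point --- is viable, and it genuinely differs from the paper, which instead splits at the zero $\xi$ of the eigenfunction of $\Psi$ and proves $\Lambda^{D,N}(\Psi,[\xi,b]) \geq \Lambda^{D,N}(\Psi_0,[0,b])$ (when $\xi \geq 0$) by contradiction: if not, domain monotonicity yields $\eta$ with $\Lambda^{D,N}(\Psi,[\xi,\eta]) = \lambda_0$, and the corresponding non-negative increasing eigenfunction $u$ satisfies $-\Delta_{\Psi_0} u \leq \lambda_0 u$ on $[\xi,\eta]$ because $u' \geq 0$ and $V' \leq V_0'$ there, so its Rayleigh quotient with respect to $\Psi_0$ gives $\Lambda^{D,N}(\Psi_0,[\xi,\eta]) \leq \lambda_0$, contradicting strict domain monotonicity of $\Psi_0$. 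That route needs no Riccati analysis and no singular endpoint.

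The genuine gap in your write-up is that your key analytic step, the Riccati comparison, is stated in the wrong direction, traceable to a sign slip. With $w = v'/v$ for the $\Psi$-shooting solution at level $\Lambda_0$ and $w_0 = u_0'/u_0$, the correct equation for $\delta := w_0 - w$ is $\delta' = -(V_0' + w_0 + w)\delta + (V'-V_0')\,w$ (you wrote $-(V'-V_0')w$); on $(0,b]$ the forcing $(V'-V_0')w$ is nonpositive where $w \geq 0$, and near the Dirichlet endpoint $\delta \to \tfrac{1}{2}(V'(0)-V_0'(0)) \leq 0$, so the true conclusion is $w \geq w_0$ on $(0,b)$, not $w \leq w_0$. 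This is also exactly what you need: $w \geq w_0 > 0$ forces $v' > 0$ on $(0,b)$, so the first Neumann point of the shooting solution is at or beyond $b$, which gives $\Lambda^{D,N}(\Psi,[0,b]) \geq \Lambda_0$ directly, with no contradiction argument. Your stated conclusion $v'(b) \leq 0$ would instead mean $v'$ vanishes at some $\eta \leq b$, i.e. $\Lambda^{D,N}(\Psi,[0,\eta]) = \Lambda_0$ and hence $\Lambda^{D,N}(\Psi,[0,b]) \leq \Lambda_0$ --- the opposite of the desired inequality --- and the subsequent appeal to ``overshooting'' and Sturm oscillation cannot repair this. A quick sanity check that the correct direction is $w \geq w_0$: take $V_0 \equiv 0$, $V = -cx$ on $[0,1]$ with $c$ large (so $V' \leq V_0'$); the shooting solution at $\lambda_0 = (\pi/2)^2$ is a combination of growing exponentials with $v' > 0$ everywhere, and $\Lambda^{D,N}$ indeed increases. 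Once the sign is fixed, your assembly step does close: for $c \geq 0$ the increasing map $c \mapsto \Lambda^{D,N}(\Psi,[c,b])$ is $\geq \Lambda^{D,N}(\Psi,[0,b]) \geq \lambda_0$, for $c \leq 0$ symmetrically, and evaluating at the zero $\xi$ of the $\Psi$-eigenfunction gives $\Lambda^{N,N}(\Psi,[a,b]) \geq \lambda_0$; but as written the central comparison is backwards and the proof does not stand.
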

\begin{proof}
Let $u$ denote the eigenfunction of $-\Delta^{N,N}_{\Psi}$ associated to the first non-zero eigenvalue $\Lambda^{N,N}(\Psi,[a,b])$, and let $\xi \in (a,b)$ denote its (unique) zero. Clearly:
\[
\phantom{\lambda_0 :=} \Lambda^{N,N}(\Psi,[a,b]) = \Lambda^{N,D}(\Psi,[a,\xi]) = \Lambda^{D,N}(\Psi,[\xi,b]) ,
\]
and:
\[
\lambda_0 := \Lambda^{N,N}(\Psi_0,[a,b]) = \Lambda^{N,D}(\Psi_0,[a,0]) = \Lambda^{D,N}(\Psi_0,[0,b]) .
\]

Assume first that $\xi \in [0,b]$. We will show that $\Lambda^{D,N}(\Psi,[\xi,b]) \geq \Lambda^{D,N}(\Psi_0,[0,b]) = \lambda_0$, thereby establishing the assertion. 
If this were not the case, then by domain monotonicity we would have $\Lambda^{D,N}(\Psi,[\xi,\eta]) = \lambda_0$ for some $\eta \in (\xi,b)$. Let $u \in C^{\infty}([\xi,\eta])$ be the corresponding monotone increasing (non-negative) eigenfunction solving:
\[
-\Delta_{\Psi} u = \lambda_0 u ~,~ u(\xi) = 0 ~,~ u'(\eta) = 0 . 
\]
Since $u' \geq 0$ and $V' \leq V_0'$ on $[\xi,\eta] \subset [0,b]$, this implies that:
\[
-\Delta_{\Psi_0} u \leq \lambda_0 u  \text{ on } [\xi,\eta] . 
\]
Using the non-negativity of $u$, we deduce that:
\[
\int_{\xi}^{\eta} u (-\Delta_{\Psi_0} u) \Psi_0 dx \leq \lambda_0 \int_{\xi}^{\eta} u^2 \Psi_0 dx , 
\]
and so by the min-max theorem, 
we conclude that $\Lambda^{D,N}(\Psi_0,[\xi,\eta]) \leq \lambda_0$. On the other hand, domain monotonicity implies that $\Lambda^{D,N}(\Psi_0,[\xi,\eta]) > \Lambda^{D,N}(\Psi_0,[0,b]) = \lambda_0$, and we obtain our desired contradiction. 

If $\xi \in [a,0]$, we conclude by a similar argument that $\Lambda^{N,D}(\Psi,[a,\xi]) \geq \Lambda^{N,D}(\Psi_0,[a,\xi]) = \lambda_0$ (now $u$ is the non-positive monotone increasing eigenfunction corresponding to $\Lambda^{N,D}(\Psi,[\eta,\xi]) = \lambda_0$, and $-\Delta_{\Psi_0} u \geq \lambda_0 u$ on $[\eta,\xi]$). 
\end{proof}

\section{One dimensional model} \label{sec:1D}

\subsection{One dimensional $\MCP$ densities}

We say that a non-negative $h \in L^1_{loc}(\R,\mathcal L^1)$  is a $\mcp$ density if:
\begin{equation}\label{eq:mcp}
h(tx_1+(1-t)x_0) \geq \sigma^{(1-t)}_{K, N-1} (|x_1-x_0|)^{N-1} h(x_0)
\end{equation}
for all $x_0, x_1 \in \supp h$ and $t\in [0,1]$. We use $\supp h$ throughout to denote $\supp(h \mathcal L^1)$, where, recall, $\mathcal L^1$ denotes the Lebesgue measure on $\R$.
The following is well-known:

\begin{lemma} \label{lem:MCP-density}
The one-dimensional metric-measure space $(\R, |\cdot|,  h \mathcal L^1)$ satisfies $\mcp$ if and only if (up to modification on a null-set) $h$ is a $\mcp$ density. 
\end{lemma}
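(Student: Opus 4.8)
The plan is to reduce the metric measure $\MCP(K,N)$ condition for the one-dimensional space $(\R, |\cdot|, h\mathcal L^1)$ to the pointwise density inequality \eqref{eq:mcp}. I would first observe that, since everything happens on the real line, the only geodesics in $\geo(\R,|\cdot|)$ are the affine segments $t \mapsto (1-t)x_0 + t x_1$, so the optimal dynamical plan in the definition of $\MCP(K,N)$ from the point $o$ is essentially forced: for $\mu_0 = \frac{1}{\mm(A)}\mm\llcorner_A$ (with $\mm = h\mathcal L^1$) and target $\delta_o$, the unique monotone optimal transport map sends $x \in A$ to $o$ along the segment $[x,o]$, and $e_t$ sends that to $(1-t)x + t o$. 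Thus $\Pi$ is the pushforward of $\mu_0$ under $x \mapsto (\text{segment from } x \text{ to } o)$, and $(e_t)_\sharp \Pi$ is the pushforward of $\mu_0$ under the contraction $T_t(x) = (1-t)x + to$.

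The core of the argument is then a change-of-variables computation. For the ``if'' direction, assuming $h$ is a $\MCP(K,N)$ density, I would fix $o \in \supp h$ and a bounded Borel set $A \subset \supp h$, and show that for every Borel $B$,
\[
(e_t)_\sharp\big(\tau_{K,N}^{(1-t)}(|x - o|)^N \Pi(d\gamma)\big)(B) \leq \frac{\mm(B)}{\mm(A)}.
\]
Writing the left side as $\frac{1}{\mm(A)}\int_{A \cap T_t^{-1}(B)} \tau_{K,N}^{(1-t)}(|x-o|)^N h(x)\,dx$ and changing variables $y = T_t(x) = (1-t)x + to$ (so $dy = (1-t)\,dx$ on the relevant branch), the claim becomes the pointwise inequality $\tau_{K,N}^{(1-t)}(|x-o|)^N h(x) \leq (1-t) h((1-t)x+to)$ for a.e. $x$; recalling $\tau_{K,N}^{(1-t)}(\theta)^N = (1-t)\,\sigma_{K,N-1}^{(1-t)}(\theta)^{N-1}$, this is exactly \eqref{eq:mcp} with $x_0 = x$, $x_1 = o$, and parameter $1-t$ (using $\sigma_{K,N-1}^{(s)}(|x-o|) = \sigma_{K,N-1}^{(s)}(|o-x|)$, which holds since $s_\kappa$ depends only on the length $\theta$). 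The handling of the absolute value requires splitting $A$ into its parts to the left and right of $o$, but on each part $T_t$ is affine with slope $1-t$, so no difficulty arises; one should also note $\tau_{K,N}^{(1-t)}(\theta) = +\infty$ when $\theta \geq D_{K,N}$, which forces the relevant sets to have measure zero — consistent with the Bonnet--Myers bound that \eqref{eq:mcp} already imposes on $\supp h$ when $K>0$.

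For the converse, I would assume $(\R,|\cdot|,h\mathcal L^1)$ satisfies $\MCP(K,N)$ and run the computation backwards: testing \eqref{eq-MCP} against small sets $A = B_\epsilon(x_0)$ and $B = T_t(A)$ shrinking to a Lebesgue point $x_0$ of $h$, and taking $o = x_1$, the measure inequality forces $\tau_{K,N}^{(1-t)}(|x_1-x_0|)^N h(x_0) \leq (1-t)h((1-t)x_0 + tx_1)$ in the limit, provided $x_0, x_1$ are both Lebesgue points of $h$ — which is a.e.\ $x_0,x_1$, hence \eqref{eq:mcp} holds after modification on a null set by lower-semicontinuous regularization along $\supp h$. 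The main obstacle, and the reason this is only ``well-known'' rather than trivial, is the careful bookkeeping in the change of variables when $A$ straddles $o$ and when $o \in \partial(\supp h)$ or the density degenerates at endpoints: one must make sure the non-branching/uniqueness of the transport map is used correctly and that the inequality \eqref{eq-MCP}, which is an inequality of measures (not densities), is correctly localized to recover the pointwise statement. Since this lemma is standard (it appears e.g.\ in the work of Cavalletti--Mondino and the references of the paper), I expect the authors simply to cite it or sketch exactly this reduction.
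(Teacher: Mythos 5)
Your proposal is correct and takes essentially the same route as the paper: the paper disposes of the ``if'' direction by citing Sturm's Corollary 5.5(i) (which encapsulates exactly your change-of-variables verification along the contraction to $\delta_o$), and for the ``only if'' direction it sketches precisely your argument of testing the measure inequality on small intervals, letting $\eps \to 0$ and using Lebesgue differentiation, with references to Cavalletti--Sturm and Calderon for the details. The one point where you are brief---upgrading the almost-everywhere inequality to \eqref{eq:mcp} everywhere after modification on a null set---is likewise delegated to citations in the paper, so there is no gap relative to its own proof.
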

\begin{proof}
The if direction follows from \cite[Corollary 5.5 (i)]{SturmCD2}. The only if direction follows by considering the $\mcp$ condition for uniform measures $\mu_0,\mu_1$ on intervals of length $\eps$ and $\alpha \eps$, respectively, letting $\eps \rightarrow 0$, employing Lebesgue's differentiation theorem, and optimizing on $\alpha > 0$ (e.g. as in the proof of \cite[Theorem 4.3]{Cavalletti-Sturm12} or \cite[Theorem 3.3.6]{Calderon18}).
\end{proof}

\begin{definition}
Given $K \in \R$, $D \in (0,\infty)$ and $N \in (1,\infty)$, we define $ \mathcal {MCP}^1_{K, N, D}$ as the collection of $\mcp$ densities $h\in L^1(\R, \mathcal L^1)$ with $\supp h=[0, D]$.
\end{definition}

Recalling the definitions of $\sigma_{K, N-1}$ and $s_{\kappa}$ from (\ref{eq:sigma}) and (\ref{eq:skappa}), it is immediate to check that \eqref{eq:mcp}  is equivalent to the requirement that ${\diam(\supp h)} \leq D_{K,N}$ and:
\begin{equation}\label{eq1}
\left (\frac {s_{K/(N-1)}(b-x_1)}{s_{K/(N-1)}(b-x_0)} \right )^{N-1} \leq \frac{h(x_1)}{h(x_0)} \leq \left (\frac {s_{K/(N-1)}(x_1-a)}{s_{K/(N-1)}(x_0-a)} \right )^{N-1}
\end{equation}
for all $ [x_0,  x_1] \subset [a, b] \subset  \supp h$. Moreover, we have the following known characterization (c.f. \cite[(2.10)]{CS18}):
\begin{lemma}\label{lemma:mcp} 
A  density $h$ is in $ \mathcal {MCP}^1_{K, N, D}$ if and only if $D \leq D_{K,N}$ and:
\begin{equation}\label{eq1.1}
\left (\frac {s_{K/(N-1)}(D-x_1)}{s_{K/(N-1)}(D-x_0)} \right )^{N-1} \leq \frac{h(x_1)}{h(x_0)} \leq \left (\frac {s_{K/(N-1)}(x_1)}{s_{K/(N-1)}(x_0)} \right )^{N-1}~~~\forall ~0\leq x_0 \leq x_1 \leq D.
\end{equation}
\end{lemma}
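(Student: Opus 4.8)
The plan is to deduce the claimed two-sided bound \eqref{eq1.1} from the general characterization \eqref{eq1} by carefully choosing the endpoints $a$ and $b$, and conversely to recover \eqref{eq1} from \eqref{eq1.1} by a monotonicity/chaining argument. For the ``only if'' direction, suppose $h \in \mathcal{MCP}^1_{K,N,D}$, so $\supp h = [0,D]$ and \eqref{eq1} holds for all $[x_0,x_1]\subset[a,b]\subset[0,D]$. Fix $0\le x_0\le x_1\le D$. Taking $a=0$ and $b=D$ in \eqref{eq1} immediately gives
\[
\left(\frac{s_{K/(N-1)}(D-x_1)}{s_{K/(N-1)}(D-x_0)}\right)^{N-1} \le \frac{h(x_1)}{h(x_0)} \le \left(\frac{s_{K/(N-1)}(x_1)}{s_{K/(N-1)}(x_0)}\right)^{N-1},
\]
which is exactly \eqref{eq1.1}. (The constraint $D\le D_{K,N}$ is part of the hypothesis that \eqref{eq:mcp} holds with $\supp h=[0,D]$, as already noted after \eqref{eq:mcp}.) So this direction is essentially immediate once one observes that the widest choice of ambient interval $[a,b]=[0,D]$ gives the strongest constraint among the lower bounds and, separately, among the upper bounds.

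For the ``if'' direction, assume $D\le D_{K,N}$ and that \eqref{eq1.1} holds for all $0\le x_0\le x_1\le D$; we must recover \eqref{eq1} for an arbitrary sub-triple $[x_0,x_1]\subset[a,b]\subset[0,D]$, which by Lemma~\ref{lem:MCP-density} (and the equivalence of \eqref{eq:mcp} with \eqref{eq1}) will establish $h\in\mathcal{MCP}^1_{K,N,D}$. The key analytic input is the following elementary fact about $s_\kappa$: for fixed $\kappa\in\R$, the function $\theta\mapsto s_\kappa(\theta+c)/s_\kappa(\theta)$ is monotone on the relevant domain (decreasing in $\theta$ for $c>0$ when $\kappa\ge 0$, etc.), equivalently $s_\kappa$ satisfies the concavity-type addition inequality $s_\kappa(\alpha+\beta)s_\kappa(\gamma) \lessgtr s_\kappa(\alpha+\gamma)s_\kappa(\beta)$ in the appropriate direction. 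Concretely: one shows that the upper bound in \eqref{eq1} with ambient left endpoint $a\ge 0$,
\[
\frac{h(x_1)}{h(x_0)} \le \left(\frac{s_{K/(N-1)}(x_1-a)}{s_{K/(N-1)}(x_0-a)}\right)^{N-1},
\]
is \emph{weaker} than the $a=0$ version in \eqref{eq1.1}, because $s_\kappa(x_1-a)/s_\kappa(x_0-a) \ge s_\kappa(x_1)/s_\kappa(x_0)$ whenever $a\ge 0$ and $x_0\le x_1$ (the ratio $s_\kappa(x_1-a)/s_\kappa(x_0-a)$ increases as $a$ increases from $0$). Symmetrically, the lower bound in \eqref{eq1} with ambient right endpoint $b\le D$ is weaker than the $b=D$ version in \eqref{eq1.1}, since $s_\kappa(b-x_1)/s_\kappa(b-x_0)$ increases as $b$ decreases. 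Hence \eqref{eq1.1} implies \eqref{eq1} for every admissible $[a,b]$.

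The main obstacle, such as it is, lies in verifying the monotonicity of the ratio $b\mapsto s_\kappa(b-x_1)/s_\kappa(b-x_0)$ (and its left-endpoint analogue) uniformly across the three regimes $\kappa>0$, $\kappa=0$, $\kappa<0$, while respecting the domain restriction $b-x_0 < \pi/\sqrt{\kappa}$ when $\kappa>0$ — this is where the Bonnet--Myers bound $D\le D_{K,N}$, i.e. $D\sqrt{K/(N-1)}\le\pi$, is used to guarantee all arguments stay in the domain of $s_{K/(N-1)}$. This reduces to checking that $\frac{d}{db}\log\frac{s_\kappa(b-x_1)}{s_\kappa(b-x_0)} = \frac{s_\kappa'(b-x_1)}{s_\kappa(b-x_1)} - \frac{s_\kappa'(b-x_0)}{s_\kappa(b-x_0)}$ has a definite sign, which follows since $s_\kappa'/s_\kappa = \sqrt{\kappa}\cot(\sqrt{\kappa}\,\cdot)$ (resp. $1/\theta$, resp. $\sqrt{-\kappa}\coth(\sqrt{-\kappa}\,\cdot)$) is a decreasing function of its argument on the relevant interval, and $b-x_1 \le b-x_0$. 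Everything else is bookkeeping; this monotonicity is standard (it is exactly the one-dimensional comparison underlying Bishop--Gromov), so I would cite it as well-known, in line with the reference to \cite[(2.10)]{CS18} already given in the statement.
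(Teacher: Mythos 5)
Your proposal follows essentially the same route as the paper's proof: the ``only if'' direction is just the specialization $a=0$, $b=D$ of \eqref{eq1}, and the converse reduces to monotonicity in the endpoints of the ratios $a\mapsto s_{K/(N-1)}(x_1-a)/s_{K/(N-1)}(x_0-a)$ and $b\mapsto s_{K/(N-1)}(b-x_1)/s_{K/(N-1)}(b-x_0)$, which (as in the paper) follows from the fact that $s_\kappa'/s_\kappa$ is decreasing on the relevant domain. One slip to correct: you assert that $s_\kappa(b-x_1)/s_\kappa(b-x_0)$ ``increases as $b$ decreases,'' whereas your own derivative computation $\frac{\d}{\d b}\log\frac{s_\kappa(b-x_1)}{s_\kappa(b-x_0)}=\frac{s_\kappa'(b-x_1)}{s_\kappa(b-x_1)}-\frac{s_\kappa'(b-x_0)}{s_\kappa(b-x_0)}\ge 0$ shows it is non-decreasing in $b$ --- which is precisely the direction needed so that the $b=D$ lower bound is the strongest one; with that sentence fixed the argument is complete and coincides with the paper's.
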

\begin{proof}
Immediate from (\ref{eq1}) after checking that for $0 \leq x_0 \leq x_1 \leq D$ the function
\[
a  \mapsto \frac {s_{K/(N-1)}(x_1-a)}{s_{K/(N-1)}(x_0-a)}
\]
 is non-decreasing on $[0, x_0]$, and the function
\[
b  \mapsto \frac {s_{K/(N-1)}(b-x_1)}{s_{K/(N-1)}(b-x_0)}
\] 
is non-decreasing on $[x_1, D]$. 
\end{proof}

This gives rise to the following definition:
\begin{definition}
Given $D \leq D_{K,N}$, the model $\mathcal{MCP}^1_{K,N,D}$ Poincar\'e density $h_{K,N,D}$ is defined by:
\[
h_{K,N,D} (x) := \left\{\begin{array}{lll}  
s^{N-1}_{K/(N-1)}(D-x) & \text{if}~~ x \in [0,D/2]  \\
s^{N-1}_{K/(N-1)}(x) & \text{if}~~ x \in [D/2,D] \\
\end{array}
\right .    .
\]
\end{definition}
\begin{remark}
Note that indeed $h_{K,N,D} \in \mathcal{MCP}^1_{K,N,D}$; this follows from (\ref{eq1.1}) and the fact that the function
\[
(0,D] \ni x \mapsto \frac{s_{K/(N-1)}(D-x)}{s_{K/(N-1)}(x)} 
\]
is decreasing, as verified in \cite[Lemma 3.3]{CS18}. 
$h_{K,N,D}$ is precisely the ``middle" model density (corresponding to $a = D/2$) from the family of isoperimetric $\mathcal{MCP}^1_{K,N,D}$ model densities $h^a_{K,N,D}$ considered by Cavalletti and Santarcangelo in \cite{CS18}. 
\end{remark}

We immediately deduce from (\ref{eq1.1}) (c.f. \cite[Lemma A.9]{CavallettiEMilman-LocalToGlobal}):
\begin{corollary} \label{cor:logLipschitz}
If $h \in \mathcal {MCP}^1_{K, N, D}$, then at every point $x \in [0,D]$ where $h$ if differentiable:
\[
-(\log s^{N-1}_{K/(N-1)})'(D - x) \leq (\log h)'(x) \leq (\log s^{N-1}_{K/(N-1)})'(x) . 
\]
In particular:
\[
(\log h)'(x) \left\{\begin{array}{lll}  
\geq (\log h_{K,N,D})'(x) & \text{if}~~ x \in [0,D/2)  \\
\leq (\log h_{K,N,D})'(x) & \text{if}~~ x \in (D/2,D] \\
\end{array}
\right . .
\]
\end{corollary}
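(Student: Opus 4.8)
The plan is to obtain the first display from the two-sided ratio estimate \eqref{eq1.1} of Lemma~\ref{lemma:mcp} by a one-sided finite-difference argument, letting the increment tend to $0$, and then to read off the ``in particular'' statement by differentiating $\log h_{K,N,D}$ explicitly. This is essentially routine; the only points needing attention are the behaviour at the endpoints $0$ and $D$ (where only one of the two one-sided increments keeps us inside $[0,D]$, so only a one-sided difference quotient of \eqref{eq1.1} is available there) and the degenerate case $K>0$, $D=D_{K,N}$ (in which $s_{K/(N-1)}(D)=0$, forcing $h$ to vanish at an endpoint, so that $(\log h)'$ is not meaningful there and that endpoint may be ignored).

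First I would note that $h>0$ on the open interval $(0,D)$: since $\supp h=[0,D]$ there is a point $y$ with $h(y)>0$, and feeding $y$ together with an arbitrary $x\in(0,D)$ into \eqref{eq1.1} (using the appropriate one of the two inequalities according to whether $x<y$ or $x>y$) forces $h(x)>0$, because $s_{K/(N-1)}$ is strictly positive on $(0,D)\subset(0,D_{K,N})$. Hence $(\log h)'(x)$, which is meaningful only where $h(x)>0$, is well defined at every interior point where $h$ is differentiable, and it suffices to argue for such $x$ (the endpoints being treated in the same fashion when $h$ does not vanish there, one of the two resulting bounds being vacuous and matching the correspondingly vacuous $\pm\infty$ in the claim, and being irrelevant when $h$ does vanish there).

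Now fix $x\in(0,D)$ at which $h$ is differentiable. Taking $(x_0,x_1)=(x,x+\eps)$ in \eqref{eq1.1} for small $\eps>0$, taking logarithms, dividing by $\eps$, and letting $\eps\to 0^+$ — using that $s_{K/(N-1)}$ is $C^1$ and positive at $x$ and at $D-x$, that $\log$ is smooth at $h(x)>0$, and the chain rule $\frac{d}{d\eps}\log s_{K/(N-1)}(D-x-\eps)\big|_{\eps=0}=-(\log s_{K/(N-1)})'(D-x)$ — the right-hand inequality of \eqref{eq1.1} yields $(\log h)'(x)\le (\log s^{N-1}_{K/(N-1)})'(x)$, while the left-hand inequality yields $(\log h)'(x)\ge -(\log s^{N-1}_{K/(N-1)})'(D-x)$. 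This is precisely the first display.

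Finally, for the ``in particular'' part I would simply differentiate the definition of $h_{K,N,D}$: on $(0,D/2)$ one has $\log h_{K,N,D}(x)=(N-1)\log s_{K/(N-1)}(D-x)$, so $(\log h_{K,N,D})'(x)=-(\log s^{N-1}_{K/(N-1)})'(D-x)$, and the lower bound just established gives $(\log h)'(x)\ge(\log h_{K,N,D})'(x)$ on $[0,D/2)$; likewise on $(D/2,D)$ one has $(\log h_{K,N,D})'(x)=(\log s^{N-1}_{K/(N-1)})'(x)$, and the upper bound gives $(\log h)'(x)\le(\log h_{K,N,D})'(x)$ on $(D/2,D]$. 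There is no serious obstacle in any of these steps beyond the bookkeeping of the endpoint and degenerate cases noted above.
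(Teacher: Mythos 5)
Your argument is correct and is essentially the paper's own: the corollary is deduced directly from the two-sided ratio bound \eqref{eq1.1} by taking one-sided difference quotients of the logarithms and letting the increment tend to zero, and the ``in particular'' part is just the explicit differentiation of $\log h_{K,N,D}$ on the two halves of $[0,D]$. Your extra care with positivity of $h$ in the interior and with the endpoint/degenerate cases is consistent with what the paper implicitly relies on.
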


\subsection{One dimensional Poincar\'e inequality} 

Given a density $h \in L^1_{loc}(\R,\mathcal{L}^1)$, denote its associated Poincar\'e constant on an interval $I \subset \R$ by \[
\lambda[h,I] := \lambda_{(I , |\cdot| , h \mathcal{L}^1)} = \inf \left \{\frac {\int_{I}   |f'|^2 \, h \, dx}{\int_{I} |f|^2  \, h \, dx}: f \in \Lip_{loc}(I) , \int_{I} f \, h \, dx =0,  0 < \int_I |f|^2 \, h \, dx < \infty  \right \} .
\]
We abbreviate $\lambda[h] := \lambda[h, \R]$. By a classical variational argument (cf. \cite[Proposition 4.5.4]{BGL-Book} or \cite[Theorem 4.2]{BobkovGotze-HardyInqs}),
 the Poincar\'e constant coincides with the first non-zero Neumann eigenvalue for all (say) piecewise smooth densities $h$ on $I$:
\[
\lambda[h,I] = \Lambda^{N,N}(h,I) . 
\]

In addition, the following simple perturbation lemma is well-known (see e.g. \cite[Proposition 5.5]{Ledoux-book}):
\begin{lemma} \label{lem:HS}
Given two positive densities $h_1,h_2$ on an interval $I \subset \R$, denote:
\[
\osc(h_1,h_2,I) := \esssup_{x \in I} \frac{h_2(x)}{h_1(x)} \cdot \esssup_{x \in I} \frac{h_1(x)}{h_2(x)} . 
\]
Then:
\[
\frac{1}{\osc(h_1,h_2,I)} \lambda[h_1,I] \leq \lambda[h_2,I] \leq  \osc(h_1,h_2,I) \lambda[h_1,I] . 
\]
\end{lemma}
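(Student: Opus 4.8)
The plan is to prove only the upper bound $\lambda[h_2,I] \leq \osc(h_1,h_2,I)\,\lambda[h_1,I]$; the lower bound follows by symmetry, since $\osc(h_1,h_2,I) = \osc(h_2,h_1,I)$. Write $C := \osc(h_1,h_2,I)$, $M_1 := \esssup_{x \in I} \frac{h_2(x)}{h_1(x)}$ and $M_2 := \esssup_{x \in I} \frac{h_1(x)}{h_2(x)}$, so that $C = M_1 M_2$, and note that $\frac{1}{M_2} h_1 \leq h_2 \leq M_1 h_1$ a.e.\ on $I$.

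First I would take any competitor $f \in \Lip_{loc}(I)$ admissible for $\lambda[h_2,I]$, i.e.\ with $\int_I f\, h_2\, dx = 0$ and $0 < \int_I |f|^2\, h_2\, dx < \infty$. The obstacle — and it is only a bookkeeping one — is that $f$ need not be balanced with respect to $h_1$, so it is not directly admissible for $\lambda[h_1,I]$. I would fix this by subtracting its $h_1$-mean: set $g := f - c$ where $c := \frac{1}{h_1(I)}\int_I f\, h_1\, dx$ (assuming $h_1(I) < \infty$; if $I$ is unbounded one works on an exhaustion by compact subintervals and passes to the limit, or one notes $h_1, h_2$ integrable follows from the finiteness of the relevant integrals — in our application $I$ is always a bounded interval so this is a non-issue). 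Then $\int_I g\, h_1\, dx = 0$, and $g$ is admissible for $\lambda[h_1,I]$ provided $\int_I |g|^2 h_1\, dx > 0$; if this integral vanishes then $f$ is constant, contradicting $\int_I |f|^2 h_2 > 0$ together with $\int_I f h_2 = 0$ (a nonzero constant cannot be $h_2$-balanced), so $g$ is genuinely admissible. Since $g' = f'$, applying the definition of $\lambda[h_1,I]$ to $g$ gives
\[
\lambda[h_1,I] \int_I |g|^2\, h_1\, dx \;\leq\; \int_I |f'|^2\, h_1\, dx.
\]

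Next I would bound each side by switching densities. On the right, $h_1 \leq M_2\, h_2$ pointwise a.e., so $\int_I |f'|^2 h_1\, dx \leq M_2 \int_I |f'|^2 h_2\, dx$. On the left, I use that the $h_1$-mean $c$ minimizes $t \mapsto \int_I |f - t|^2 h_1\, dx$, hence $\int_I |g|^2 h_1\, dx = \int_I |f - c|^2 h_1\, dx \geq$ the value at any other $t$; more usefully, I want a \emph{lower} bound on $\int_I |g|^2 h_1$ in terms of $\int_I |f|^2 h_2$. Here I instead argue: among all $t$, $\int_I |f-t|^2 h_2\, dx$ is minimized at $t = 0$ (since $f$ is $h_2$-balanced), so $\int_I |f|^2 h_2\, dx \leq \int_I |f - c|^2 h_2\, dx = \int_I |g|^2 h_2\, dx \leq M_1 \int_I |g|^2 h_1\, dx$, using $h_2 \leq M_1 h_1$. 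Combining the three displayed inequalities,
\[
\lambda[h_1,I] \cdot \frac{1}{M_1} \int_I |f|^2\, h_2\, dx \;\leq\; \lambda[h_1,I] \int_I |g|^2\, h_1\, dx \;\leq\; \int_I |f'|^2\, h_1\, dx \;\leq\; M_2 \int_I |f'|^2\, h_2\, dx,
\]
whence $\frac{\int_I |f'|^2 h_2\, dx}{\int_I |f|^2 h_2\, dx} \geq \frac{1}{M_1 M_2}\, \lambda[h_1,I] = \frac{1}{C}\,\lambda[h_1,I]$. Taking the infimum over all admissible $f$ yields $\lambda[h_2,I] \geq \frac{1}{C}\,\lambda[h_1,I]$, i.e.\ $\lambda[h_1,I] \leq C\, \lambda[h_2,I]$; swapping the roles of $h_1$ and $h_2$ gives $\lambda[h_2,I] \leq C\,\lambda[h_1,I]$, which is the claim. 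The only genuine subtlety, as noted, is the treatment of unbounded $I$ or non-integrable densities, handled by a routine exhaustion/truncation argument; in every application in this paper $I$ is a bounded interval and the densities are continuous, so the above suffices.
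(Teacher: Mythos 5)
Your argument is correct: it is the standard bounded-perturbation proof (compare the energy using $h_1\le M_2 h_2$, and the variance using $\int|f|^2h_2=\min_t\int|f-t|^2h_2\le M_1\int|f-m_{h_1}(f)|^2h_1$), which is precisely the argument behind the reference \cite[Proposition 5.5]{Ledoux-book} that the paper cites in lieu of a proof. The only caveats you flag (finiteness of $\int_I h_i$ and of the $h_1$-mean) are indeed harmless here, since the lemma is only applied on bounded intervals with bounded positive densities.
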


We are now ready to establish the following sharp estimate:
\begin{proposition} \label{prop:diameter-D}
Let $h$ be a $\MCP(K,N)$ density with $\diam(\supp h) = D \in (0,\infty)$, $K \in \R$ and $N \in (1,\infty)$. Then the following sharp estimate holds:
\[
\lambda[h] \geq \lambda[h_{K,N,D}] .
\]
\end{proposition}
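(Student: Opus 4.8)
The plan is to reduce Proposition~\ref{prop:diameter-D} to the ODE Comparison Principle of Lemma~\ref{lem:ODE}. Since $h$ is a $\MCP(K,N)$ density with $\diam(\supp h) = D$, after translating we may assume $\supp h = [0,D]$, so $h \in \mathcal{MCP}^1_{K,N,D}$ (using Lemma~\ref{lem:MCP-density} and Lemma~\ref{lemma:mcp}; note when $K>0$ this forces $D \leq D_{K,N}$). The target density $h_{K,N,D}$ is a specific element of the same class, and both $h$ and $h_{K,N,D}$ are locally Lipschitz in the interior (in fact locally bounded above and below away from zero by \eqref{eq1.1}), so $\lambda[h,[0,D]] = \Lambda^{N,N}(h,[0,D])$ and likewise for $h_{K,N,D}$, by the classical variational identity recalled above. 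Thus it suffices to compare the two first non-zero Neumann eigenvalues.

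The key structural observation is that $h_{K,N,D}$ should play the role of $\Psi_0 = \exp(V_0)$ in Lemma~\ref{lem:ODE}, with its defining ``kink'' at $D/2$: writing $V_0 = \log h_{K,N,D}$, we have $V_0' \leq 0$ on $(0,D/2)$ and $V_0' \geq 0$ on $(D/2,D)$ (both by monotonicity of $s_\kappa^{N-1}(D-x)$ and $s_\kappa^{N-1}(x)$), and by the symmetry $h_{K,N,D}(x) = h_{K,N,D}(D-x)$ the first Neumann eigenfunction $u_0$ is antisymmetric about $D/2$ and hence has its unique zero exactly at $D/2$. Corollary~\ref{cor:logLipschitz} then gives precisely the derivative comparison needed: $(\log h)' \geq (\log h_{K,N,D})'$ on $(0,D/2)$ and $(\log h)' \leq (\log h_{K,N,D})'$ on $(D/2,D)$. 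Recentering so that $0$ corresponds to $D/2$ (i.e. applying Lemma~\ref{lem:ODE} on $[a,b] = [-D/2,D/2]$ with $V_0(x) = \log h_{K,N,D}(x+D/2)$ and $V(x) = \log h(x+D/2)$), the hypotheses of Lemma~\ref{lem:ODE} are met, yielding $\Lambda^{N,N}(h,[0,D]) \geq \Lambda^{N,N}(h_{K,N,D},[0,D])$, which is the claim.

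Two technical points will need care. First, Lemma~\ref{lem:ODE} is stated for $V, V_0 \in C^\infty$ (with $V_0$ smooth on each of $[a,0]$, $[0,b]$), whereas a general $\MCP(K,N)$ density $h$ need only be locally Lipschitz, so $\log h$ need not be smooth. The fix is an approximation argument: by \eqref{eq1.1} the density $h$ is locally bounded above and away from zero on $(0,D)$ and has a log-derivative pinched between the two smooth bounds in Corollary~\ref{cor:logLipschitz}, so one can mollify $\log h$ in the interior to obtain smooth densities $h_\eps$ still satisfying the derivative inequalities against $\log h_{K,N,D}$ (the pinching is preserved under convolution with a symmetric kernel since the bounding functions are monotone in the right directions), with $\osc(h,h_\eps,[0,D]) \to 1$; then Lemma~\ref{lem:HS} transfers the eigenvalue comparison back to $h$ in the limit. (Alternatively, one invokes the standard fact that $\Lambda^{N,N}$ for a Sturm--Liouville problem depends continuously on the coefficients, and that Sturm--Liouville theory as recalled in Section~\ref{sec:ODE} already accommodates merely-Lipschitz potentials via the distributional formulation of the eigenvalue equation.) Second, one should check $h \in L^1$, which is automatic since $\supp h = [0,D]$ is bounded and $h$ is locally bounded on the compact $[0,D]$ by \eqref{eq1.1} applied with endpoints approaching $0$ and $D$ — actually $h$ is bounded on all of $[0,D]$. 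The main obstacle is the first point, i.e. carefully justifying the reduction from Lipschitz $h$ to the smooth setting of Lemma~\ref{lem:ODE} while preserving the one-sided derivative comparisons; everything else is bookkeeping. Sharpness is immediate, since $h_{K,N,D}$ itself lies in $\mathcal{MCP}^1_{K,N,D}$ and realizes equality.
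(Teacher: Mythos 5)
Your overall strategy is the same as the paper's: translate so that $\supp h=[0,D]$, use the pinching of Corollary \ref{cor:logLipschitz}, locate the zero of the model eigenfunction at $D/2$ by symmetry, and invoke the ODE comparison of Lemma \ref{lem:ODE}, with an approximation step to get into the smooth setting. The gap is precisely in that approximation step, which you yourself flag as the main obstacle but then resolve incorrectly. First, mollification does \emph{not} preserve the one-sided derivative comparison against $\log h_{K,N,D}$: the bounding function $x\mapsto(\log s^{N-1}_{K/(N-1)})'(x)$ is convex (e.g. for $K\le 0$ it is $(N-1)$ times $1/x$ or $\sqrt{-K/(N-1)}\coth$), so for the extremal density $h=c\,s^{N-1}_{K/(N-1)}$ Jensen's inequality gives $((\log h)'\ast\varphi_\eps)(x)>(\log s^{N-1}_{K/(N-1)})'(x)$ at interior points of $(D/2,D)$, i.e. the mollified density violates the inequality $(\log h_\eps)'\le(\log h_{K,N,D})'$ that Lemma \ref{lem:ODE} needs; monotonicity of the bounds is not enough. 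What convolution actually gives (and what the paper uses, via the convex-cone property of $\MCP$ densities with \emph{fixed} support) is that $h\ast\varphi_\eps$ restricted to $[\eps,D-\eps]$ is an $\MCP(K,N)$ density, so the pinching holds only against the \emph{shrunk} model $h_{K,N,D-2\eps}$, recentred on $[\eps,D-\eps]$ — not against $h_{K,N,D}$. Second, your claim $\osc(h,h_\eps,[0,D])\to 1$ fails in general: an $\MCP$ density may vanish at an endpoint (again $c\,s^{N-1}_{K/(N-1)}$), and then $h_\eps/h$ blows up near that endpoint, so Lemma \ref{lem:HS} cannot be applied on all of $[0,D]$; one must fix $\delta>0$ and work on $[\delta,D-\delta]$, where $h$ is bounded away from zero.

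Because of this, two further limiting arguments are required which your proposal omits: (i) comparing $\lambda[h_{K,N,D-2\delta},[\delta,D-\delta]]$ with $\lambda[h_{K,N,D},[\delta,D-\delta]]$ via Lemma \ref{lem:HS} with a constant $c_\delta\to1$ — this needs $h_{K,N,D}$ bounded below on $[0,D]$, i.e. $D<D_{K,N}$, which is why the paper disposes of the boundary case $D=D_{K,N}$ separately (there \eqref{eq1.1} forces $h=c\,s^{N-1}_{K/(N-1)}$), a case distinction you never make; and (ii) the convergence $\lambda[h_{K,N,D},[\delta,D-\delta]]\to\lambda[h_{K,N,D},[0,D]]$ as $\delta\to0$ (only upper semicontinuity of the Poincar\'e constant under domain exhaustion is needed, but it must be justified; the paper cites \cite[Theorem 5.2.4]{Calderon18}). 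Your fallback remark that Sturm--Liouville theory "accommodates merely-Lipschitz potentials" does not address either issue, since $\log h$ need not be Lipschitz up to the endpoints and the comparison in Lemma \ref{lem:ODE} is genuinely against a different (shorter) model until the final limit. So the skeleton is right, but the passage from locally Lipschitz $h$ to the smooth setting — exactly the point the paper warns is handled erroneously elsewhere in the literature — is not correctly carried out as written.
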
 
\begin{proof}
As $(\supp h,|\cdot|)$ is geodesic, it must be a compact interval; by translation invariance, we may assume that $\supp h = [0,D]$. 
If $D = D_{K,N}$ it follows immediately from (\ref{eq1.1}) that necessarily $h(x) = c \cdot s^{N-1}_{K/(N-1)}(x)$ for some $c > 0$, 
and so $\lambda[h] = \lambda[s_{K/(N-1)}] = \lambda[h_{K,N,D_{K,N}}]$ and there is nothing further to prove; consequently, we may assume that $D < D_{K,N}$. 
We now reduce to the case that $h$ is smooth and positive on its support.  While this follows from a fairly simple approximation argument, we take the time to sketch its proof, as one may find various erroneous approximation arguments in the literature (in the $\CD(K,N)$ setting).

It is known that the $\MCP(K,N)$ density $h$ is bounded above on $[0,D]$, positive on $(0,D)$, and that $\log h$ is locally Lipschitz on $(0,D)$ (see \cite[Lemmas A.8 and A.9]{CavallettiEMilman-LocalToGlobal} which were stated for $\CD(K,N)$ densities, but the proof only uses the defining property of $\MCP(K,N)$ densities). 
 Let $\varphi$ denote a smooth compactly supported non-negative function on $\R$ supported on $[-1,1]$ which integrates to $1$, and denote by $\varphi_\eps(x) := \frac{1}{\eps} \varphi(x/\eps)$, $\eps > 0$, the corresponding family of mollifiers. By definition, the family of $\MCP(K,N)$ densities \emph{having fixed support $I$} is a convex cone (note that this is totally false if the supports do not coincide).  Since the restriction of $h$ onto any non-empty sub-interval of $[0,D]$ is itself an $\MCP(K,N)$ density, it follows that the convolution $h_{\eps} = h \ast \varphi_\eps$ is an $\MCP(K,N)$ density when restricted to $[\eps,D-\eps]$ (but possibly not on $[0,D]$). It is a standard fact that $h_\eps$ is smooth and that $h_\eps \rightarrow h$ uniformly on $[\delta , D -  \delta]$ as $\eps \rightarrow 0+$ for any fixed $\delta > 0$. As $h$ is strictly positive on $[\delta,D-\delta]$, if follows that $h_\eps/ h \rightarrow 1$ uniformly on $[\delta, D - \delta]$, and hence by Lemma \ref{lem:HS} we deduce that:
\begin{equation} \label{eq:eps-convergence}
\lim_{\eps \rightarrow 0+} \lambda[h_\eps,[\delta,D-\delta]] = \lambda[h, [\delta , D - \delta]] ,
\end{equation}
for any fixed $\delta > 0$. 

Now consider the model Poincar\'e density $h_{K,N,D- 2 \delta}$, which we henceforth translate so that it is supported on $[\delta , D - \delta]$. The eigenfunction associated to the first non-zero Neumann eigenvalue $\Lambda^{N,N}(h_{K,N,D-2 \delta},[\delta, D - \delta])$ is strictly monotone and necessarily has its unique zero at $D/2$, by the symmetry of $h_{K,N,D-2\delta}$ around this point and the fact that the eigenvalue is simple. Since by Corollary \ref{cor:logLipschitz}
 \[
(\log h_\eps)'(x) \left\{\begin{array}{lll}  
\geq (\log h_{K,N,D-2 \delta})'(x) & \text{if}~~ x \in [\delta ,D/2)  \\
\leq (\log h_{K,N,D-2 \delta })'(x) & \text{if}~~ x \in (D/2,D-\delta] \\
\end{array}
\right . ,
\]
it follows from Lemma \ref{lem:ODE} (ODE comparison principle) that:
\[
\lambda[h_\eps,[\delta,D-\delta]] = \Lambda^{N,N}(h_\eps, [\delta,D-\delta]) \geq \Lambda^{N,N}(h_{K,N,D-2 \delta},[\delta, D - \delta])  = \lambda[h_{K,N,D-2 \delta},[\delta, D - \delta]] .
\]
Taking the limit as $\eps \rightarrow 0+$, (\ref{eq:eps-convergence}) implies that:
\begin{equation} \label{eq:delta-convergence}
\lambda[h, [\delta , D - \delta]]  \geq \lambda[h_{K,N,D-2 \delta},[\delta, D - \delta]] .
\end{equation}

Finally, observe that Lemma \ref{lem:HS} implies that:
\[
\lambda[h_{K,N,D-2 \delta},[\delta, D - \delta]] \geq c_{\delta}\lambda[h_{K,N,D},[\delta, D - \delta]] ,
\]
with $\lim_{\delta \rightarrow 0+} c_{\delta} = 1$ (recall that $D < D_{K,N}$ so that the density $h_{K,N,D}$ is positively bounded below on $[0,D]$). 
It remains to invoke e.g. \cite[Theorem 5.2.4]{Calderon18}, where it is shown that for any $f \in L^1([0,D], \mathcal{L}^1)$:
\[
\lim_{\delta \rightarrow 0+} \lambda[f , [\delta,D-\delta]] = \lambda[f,[0,D]] 
\]
(in fact, we just need the upper semi-continuity, which is particularly simple). 
Applying this to (\ref{eq:delta-convergence}), it follows that:
\[
\lambda[h,[0,D]] = \lim_{\delta \rightarrow 0+} \lambda[h, [\delta , D - \delta]]  \geq \lim_{\delta \rightarrow 0+} c_{\delta} \lambda[h_{K,N,D},[\delta, D - \delta]] = \lambda[h_{K,N,D} , [0,D]],
\]
as asserted. 
\end{proof}

If we only have an upper bound on the diameter of the support of $h$, we deduce:

\begin{corollary} \label{cor:diameter-at-most-D}
Let $h$ be a $\MCP(K,N)$ density with $\diam(\supp h) \leq D \in (0,\infty)$, $K \in \R$ and $N \in (1,\infty)$. Then the following sharp estimate holds:
\[
\lambda[h] \geq \left\{\begin{array}{lll}  
\lambda[h_{K,N,D}] & \text{if}~~ K \leq 0 \\
\inf_{D' \in (0,\min(D,D_{K,N})]} \lambda[h_{K,N,D'}] & \text{if}~~ K > 0 \\
\end{array}
\right . .
\]
\end{corollary}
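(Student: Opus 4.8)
The plan is to reduce the upper-diameter case to Proposition \ref{prop:diameter-D}, which handles the case of exact diameter $D'$, by combining it with a supremum/infimum over all admissible diameters $D' \le D$. Given an $\MCP(K,N)$ density $h$ with $\diam(\supp h) \le D$, write $D' := \diam(\supp h)$; since $(\supp h, |\cdot|)$ is geodesic it must be a compact interval of length $D'$, and when $K > 0$ the Bonnet--Myers bound \eqref{eq:BM} (equivalently, \eqref{eq1.1}) forces $D' \le D_{K,N}$, so in all cases $D' \in (0, \min(D, D_{K,N})]$. Proposition \ref{prop:diameter-D} then gives $\lambda[h] \ge \lambda[h_{K,N,D'}]$. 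Taking the infimum over all possible values of $D'$ yields $\lambda[h] \ge \inf_{D' \in (0, \min(D, D_{K,N})]} \lambda[h_{K,N,D'}]$, which is already the stated bound in the case $K > 0$.

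It remains to simplify this infimum when $K \le 0$ to the single value $\lambda[h_{K,N,D}]$; here $D_{K,N} = +\infty$, so the infimum ranges over $D' \in (0,D]$. The claim is that $D' \mapsto \lambda[h_{K,N,D'}]$ is monotone non-increasing on $(0,\infty)$ when $K \le 0$, so that the infimum over $(0,D]$ is attained at $D' = D$. I would prove this by a scaling/comparison argument: for $D' \le D''$, one can view $[0,D']$ as a subinterval of $[0,D'']$ and compare the densities $h_{K,N,D'}$ and $h_{K,N,D''}$ directly. Concretely, using the explicit form of $h_{K,N,D'}$ in terms of $s_{K/(N-1)}^{N-1}$ and the fact that for $\kappa \le 0$ the function $s_\kappa$ is convex and log-concave with a monotonicity in the diameter parameter, one checks via Lemma \ref{lem:ODE} (the ODE comparison principle) or a direct test-function/min-max argument that enlarging $D'$ cannot increase the Neumann spectral gap. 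Alternatively, one can simply restrict a Poincaré-extremal function on $[0,D'']$ and use domain monotonicity together with Lemma \ref{lem:HS} to control the error; this is cleaner if combined with an approximation as in the proof of Proposition \ref{prop:diameter-D}.

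The sharpness of the stated bound is immediate: the density $h_{K,N,D'}$ itself lies in $\mathcal{MCP}^1_{K,N,D'}$ (see the Remark following its definition), hence is an $\MCP(K,N)$ density with $\diam(\supp h_{K,N,D'}) = D' \le D$, and it attains equality $\lambda[h_{K,N,D'}] = \lambda[h_{K,N,D'}]$; choosing the optimal $D'$ shows the constant cannot be improved.

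The main obstacle I anticipate is the monotonicity claim for $K \le 0$: while intuitively ``larger diameter, smaller spectral gap'' is believable, the density $h_{K,N,D'}$ changes shape (not just scale) as $D'$ varies when $K \ne 0$, so the argument is not a pure rescaling and requires either a careful application of the ODE comparison Lemma \ref{lem:ODE} with the two model densities as $V_0$ and $V$, or an explicit estimate. Indeed, the authors remark that this monotonicity \emph{fails} for $K > 0$, which is precisely why the two cases in \eqref{eq:intro-optimum} genuinely differ — so the monotonicity for $K \le 0$ is a real (if elementary) point that must be verified rather than taken for granted.
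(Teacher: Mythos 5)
Your reduction is exactly the right one (and matches the paper's): set $D'=\diam(\supp h)$, note $D'\le\min(D,D_{K,N})$ by Bonnet--Myers, apply Proposition \ref{prop:diameter-D} to get $\lambda[h]\ge\lambda[h_{K,N,D'}]$, which already settles the case $K>0$. But for $K\le 0$ the entire content of the corollary beyond Proposition \ref{prop:diameter-D} is the monotonicity of $(0,\infty)\ni D'\mapsto\lambda[h_{K,N,D'}]$, and this is precisely where your proposal has a genuine gap: you flag it as the ``main obstacle'' and offer two sketches, neither carried out, and one of which cannot work. The route ``restrict a Poincar\'e-extremal function and use domain monotonicity together with Lemma \ref{lem:HS}'' is intrinsically lossy: Lemma \ref{lem:HS} compares spectral gaps only up to the factor $\osc(h_{K,N,D'},h_{K,N,D''},\cdot)$, which is strictly larger than $1$ because the two model densities have genuinely different shapes (not just different scales), so it can never yield the exact inequality $\lambda[h_{K,N,D'}]\ge\lambda[h_{K,N,D''}]$; restriction also destroys the mean-zero constraint. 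The ODE-comparison route can be made to work, but not in the form you describe (``view $[0,D']$ as a subinterval of $[0,D'']$''): Lemma \ref{lem:ODE} compares two densities on the \emph{same} interval, so one must first rescale, e.g.\ compare $\Psi(x)=h_{K,N,D'}((D'/D'')x)$ with $\Psi_0=h_{K,N,D''}$ on $[0,D'']$ (both symmetric about $D''/2$, where the eigenfunction of $\Psi_0$ vanishes), then verify the log-derivative inequalities, which for $K<0$ reduce to $a\coth(at)\le\coth(t)$ for $a\in(0,1]$, and finally deal with the kink of $\Psi$ at $D''/2$, since Lemma \ref{lem:ODE} assumes $V\in C^\infty$. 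None of these steps appears in your text.

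For comparison, the paper establishes the monotonicity without any new comparison argument, by a scaling trick that re-uses Proposition \ref{prop:diameter-D}: for $0<D'\le D$ the rescaled density $h_{K,N,D'}((D'/D)x)$ is supported on all of $[0,D]$ and is an $\MCP((D'/D)^2K,N)$ density; since $(D'/D)^2K\ge K$ when $K\le 0$, it is also an $\MCP(K,N)$ density of full diameter $D$, so Proposition \ref{prop:diameter-D} gives $(D'/D)^2\lambda[h_{K,N,D'}]\ge\lambda[h_{K,N,D}]$ -- in fact the stronger statement that $D'\mapsto(D')^2\lambda[h_{K,N,D'}]$ is non-increasing for $K\le 0$, which a fortiori yields what you need. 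So the missing step is provable with the tools you already invoke, but as written your proposal does not contain a proof of it.
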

\begin{proof}
Let $D' = \diam(\supp h) \in (0,D]$; by Bonnet--Myers (\ref{eq:BM}), we also know that $D' \leq D_{K,N}$. 
By Proposition \ref{prop:diameter-D} we have:
\[
\lambda[h] \geq \lambda[h_{K,N,D'}] ,
\]
which yields the assertion when $K > 0$. 

When $K \leq 0$, it remains to establish that:
\begin{equation} \label{eq:monotone}
(0,\infty) \ni D' \mapsto \lambda[h_{K,N,D'}] \text{ is strictly decreasing, }
\end{equation}
thereby concluding the proof. In fact, a stronger property holds, namely:
\begin{lemma} \label{lem:monotone}
The mapping:
\[
(0,D_{K,N}] \ni D' \mapsto (D')^2 \lambda[h_{K,N,D'}] 
\]
is non-increasing if $K \leq 0$ and non-decreasing if $K \geq 0$. In particular, it is constant if $K=0$. 
\end{lemma}
\begin{proof}
Let $0 < D' \leq D \leq D_{K,N}$ with $D < \infty$. Assume $K \leq 0$, and consider the scaled density $h_{K,N,D'}(\frac{D'}{D} x)$ which is supported on $[0,D]$ and satisfies $\MCP((\frac{D'}{D})^2 K ,N)$;  since $(\frac{D'}{D})^2 K \geq K$ when $K \leq 0$, it also satisfies $\MCP(K ,N)$, and so by Proposition \ref{prop:diameter-D} and scaling of the Poincar\'e constant we deduce the claim:
\[
\brac{\frac{D'}{D}}^2 \lambda[h_{K,N,D'}] = \lambda[h_{K,N,D'}((D'/D) x)] \geq \lambda[h_{K,N,D}] . 
\]
The case $K \geq 0$ is treated analogously, exchanging the roles of $D'$ and $D$. 
\end{proof}
\end{proof}

The difference between the cases $K \leq 0$ and $K > 0$ was already observed in \cite{CS18} in the isoperimetric context. In the next section, we will verify that it is not an artifact of the proof; in particular, the monotonicity property (\ref{eq:monotone}) is \textbf{false} when $K> 0$ in the relevant range $D' \in (0,D_{K,N}]$. It is an interesting question whether the function
\[
(0,D_{K,N}] \ni D' \mapsto \lambda[h_{K,N,D'}] 
\]
is at least unimodal when $K >0$, and if so, to determine where its unique minimum is attained. We provide some partial answers in the next section.

\section{Estimating $\lambda[h_{K,N,D}]$} \label{sec:estimates}

In this section, we study the quantitative dependence of $\lambda[h_{K,N,D}]$ on the parameters $K$, $N$ and $D$. Note that $h_{K a^2, N, D/a}(x) = \frac{1}{a} h_{K , N , D}(a x)$ for any $a > 0$, and so scaling of the Poincar\'e constant implies: 
\[
\lambda[h_{K a^2,N, D/a}] = a^2 \lambda[h_{K,N,D}] . 
\]
Consequently, it is only necessary to treat the cases $K=0$, $K=-1$ and $K=1$, but as this comes at no extra cost, we will analyze the general cases $K <0$ and $K> 0$ below. 
In addition, since $\MCP(K,N)$ implies $\MCP(K',N')$ for any $K' \leq K$ and $N' \geq N$, and since $h_{K,N,D}$ is an $\MCP(K,N)$ density supported on an interval of length $D$, it follows by Proposition \ref{prop:diameter-D} that:
\begin{equation} \label{eq:trivial}
\lambda[h_{K,N,D}] \geq \lambda[h_{K',N',D}] . 
\end{equation}

\medskip

To obtain more meaningful estimates, we will use 
the following classical result, first derived by Kac and Krein \cite{KacKreinVibratingString}, later by Artola, Talenti and Tomaselli (separately and independently), and generalized by Muckenhoupt, thereby bearing his name (see \cite{MuckenhouptHardyInq} and the references therein). For simplicity, we only state the version we require here (see e.g. \cite[Theorem 1.2]{BobkovGotze-HardyInqs}). 

\begin{proposition}[Muckenhoupt's criterion] \label{prop:Muck}
For any smooth positive density $\Psi$ on a compact interval $I = [a,b] \subset \R$, denote: \[
A[\Psi,I] := \sup_{x \in [a,b]} \int_a^x \frac{dt}{\Psi(t)} \int_x^b \Psi(t) dt . 
\]
Then:
\[
A[\Psi,I] \leq \frac{1}{\Lambda^{D,N}(\Psi,I)} \leq 4 A[\Psi,I] . 
\]
\end{proposition}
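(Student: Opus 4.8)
The statement to prove is Muckenhoupt's criterion (Proposition~\ref{prop:Muck}): for a smooth positive density $\Psi$ on $I=[a,b]$, the Dirichlet–Neumann spectral gap $\Lambda^{D,N}(\Psi,I)$ is comparable, up to a factor of $4$, to the reciprocal of
\[
A[\Psi,I] = \sup_{x \in [a,b]} \int_a^x \frac{dt}{\Psi(t)} \int_x^b \Psi(t)\,dt .
\]
Since the paper explicitly says ``we only state the version we require here'' and cites \cite{KacKreinVibratingString,MuckenhouptHardyInq,BobkovGotze-HardyInqs}, the expected ``proof'' is really a derivation from the classical weighted Hardy inequality, so I will organize the plan around that.

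\medskip
\noindent\textbf{Plan.}
The plan is to identify $1/\Lambda^{D,N}(\Psi,I)$ with the optimal constant $C$ in the weighted Hardy-type inequality
\[
\int_a^b g(x)^2 \,\Psi(x)\,dx \;\leq\; C \int_a^b g'(x)^2\,\Psi(x)\,dx \qquad \text{for all } g \text{ with } g(a)=0,
\]
and then to invoke the Kac–Krein/Muckenhoupt theorem which states that this optimal $C$ satisfies $A[\Psi,I] \le C \le 4 A[\Psi,I]$. First I would recall the variational characterization: by the classical argument (as already used in the excerpt for $\lambda[h,I]=\Lambda^{N,N}(h,I)$, cf. \cite[Proposition 4.5.4]{BGL-Book}), the first nonzero eigenvalue of $-\Delta^{D,N}_\Psi$ is
\[
\Lambda^{D,N}(\Psi,I) = \inf\left\{ \frac{\int_I (g')^2\,\Psi\,dx}{\int_I g^2\,\Psi\,dx} \;:\; g \in \Lip(I),\ g(a)=0,\ 0<\int_I g^2\,\Psi\,dx<\infty\right\},
\]
the Neumann condition at $b$ being natural (free). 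Hence $1/\Lambda^{D,N}(\Psi,I)$ is exactly the best constant $C$ in the displayed Hardy inequality, after substituting $d\mu = \Psi\,dx$ on both sides and $\nu=\Psi\,dx$ as the weight on the gradient side.

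\medskip
\noindent\textbf{Key steps.}
I would then apply the weighted Hardy inequality in its standard form: writing $g(x)=\int_a^x g'(t)\,dt$, the inequality $\int_a^b \big(\int_a^x \phi\big)^2 w(x)\,dx \le C \int_a^b \phi(x)^2 v(x)\,dx$ holds for all $\phi\ge 0$ if and only if
\[
B := \sup_{x\in[a,b]} \left(\int_x^b w(t)\,dt\right)\left(\int_a^x v(t)^{-1}\,dt\right) < \infty,
\]
and the optimal constant obeys $B \le C \le 4B$. This is precisely the Kac–Krein/Artola–Talenti–Tomaselli/Muckenhoupt result; I would cite \cite{MuckenhouptHardyInq} (or the convenient restatement \cite[Theorem 1.2]{BobkovGotze-HardyInqs}) rather than reprove it. Specializing $w = v = \Psi$ gives $B = A[\Psi,I]$, and combining with the identification $C = 1/\Lambda^{D,N}(\Psi,I)$ from the previous paragraph yields
\[
A[\Psi,I] \le \frac{1}{\Lambda^{D,N}(\Psi,I)} \le 4 A[\Psi,I],
\]
which is the assertion. (Under the smoothness and positivity hypotheses on $\Psi$ and compactness of $I$ all integrals are finite and $A[\Psi,I]<\infty$ automatically, so there is no degenerate case to worry about.)

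\medskip
\noindent\textbf{Main obstacle.}
There is essentially no hard step here — the content is entirely classical and the paper treats it as a black box. The only point requiring a little care is the identification of the Dirichlet–Neumann eigenvalue with the Hardy constant, i.e.\ making sure the Neumann (free) endpoint at $b$ is the correct one to pair with the Dirichlet endpoint at $a$ in the Hardy functional, and that the test-function class in the variational problem (Lipschitz functions vanishing at $a$) matches the class $\{g:g(a)=0\}$ appearing in Hardy's inequality; this follows from the standard density and regularity facts for one-dimensional Sturm–Liouville operators already invoked in Section~\ref{sec:ODE}. Everything else is a direct quotation of Muckenhoupt's theorem with the weights on both sides taken equal to $\Psi$.
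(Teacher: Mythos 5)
Your proposal is correct and coincides with the paper's treatment: the paper offers no proof of Proposition \ref{prop:Muck}, simply quoting the classical Kac--Krein/Muckenhoupt theorem via \cite[Theorem 1.2]{BobkovGotze-HardyInqs}, and your only added content --- identifying $1/\Lambda^{D,N}(\Psi,I)$ with the best constant in the Hardy inequality over Lipschitz $g$ with $g(a)=0$ (Dirichlet imposed at $a$, Neumann natural at $b$) and then specializing the two weights to $\Psi$ --- is exactly the standard reduction that the citation presupposes.
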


As explained in the previous section,
\[
\lambda[h_{K,N,D}] = \Lambda^{N,N}(h_{K,N,D},[0,D]) = \Lambda^{D,N}(h_{K,N,D},[D/2,D]) ,
\]
and so Proposition \ref{prop:Muck} provides us with a way to estimate $\lambda[h_{K,N,D}]$ quite well. 

\subsection{Case $K=0$}

\begin{lemma} \label{lem:zero}
For all $N \in (1,\infty)$ and $D \in (0,\infty)$:
\[
\pi^2 N^2 2^{-(N-1)} \geq D^2 \lambda[h_{0,N,D}] \geq \frac{1}{4} N^2 2^{-(N-1)} . 
\]
\end{lemma}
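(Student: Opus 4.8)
The plan is to use Muckenhoupt's criterion (Proposition~\ref{prop:Muck}) applied to the density $h_{0,N,D}$ on the half-interval $[D/2,D]$, since we know $\lambda[h_{0,N,D}] = \Lambda^{D,N}(h_{0,N,D},[D/2,D])$ and on this interval $h_{0,N,D}(x) = x^{N-1}$ (because $s_0(\theta)=\theta$). So the whole task reduces to estimating the Muckenhoupt functional
\[
A := A[h_{0,N,D},[D/2,D]] = \sup_{x \in [D/2,D]} \int_{D/2}^x \frac{dt}{t^{N-1}} \int_x^D t^{N-1}\, dt ,
\]
and then invoking $A \leq 1/\Lambda^{D,N} \leq 4A$, which gives $\frac{1}{4A} \leq \lambda[h_{0,N,D}] \leq \frac{1}{A}$.

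First I would compute the two integrals explicitly. For $N \neq 2$, $\int_{D/2}^x t^{1-N}\,dt = \frac{x^{2-N} - (D/2)^{2-N}}{2-N}$ and $\int_x^D t^{N-1}\,dt = \frac{D^N - x^N}{N}$ (with the obvious logarithmic modification when $N=2$, which can be handled as a limiting case or separately). Writing $x = D u$ with $u \in [1/2,1]$, the product becomes $D^2$ times a function $\phi_N(u)$ depending only on $N$ and $u$, so $A = D^2 \sup_{u\in[1/2,1]} \phi_N(u)$. The goal is then to show
\[
\frac{2^{-(N-1)}}{N^2} \;\leq\; \sup_{u \in [1/2,1]} \phi_N(u) \;\leq\; \frac{4 \cdot 2^{-(N-1)}}{\pi^2 N^2},
\]
which after feeding through Muckenhoupt yields exactly the claimed two-sided bound $\frac{1}{4}N^2 2^{-(N-1)} \leq D^2\lambda[h_{0,N,D}] \leq \pi^2 N^2 2^{-(N-1)}$. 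For the lower bound on the supremum it suffices to plug in a single convenient value of $u$; the natural choice is $u=1/2$ is degenerate (the first integral vanishes), so instead one should try $u$ near $1$ or optimize roughly — but actually, to get the clean constant $2^{-(N-1)}/N^2$, I expect the right move is to bound $\int_{D/2}^x t^{1-N}\,dt \geq (x - D/2)(D/2)^{1-N} \cdot$(something) is too lossy; better is to evaluate at $x = D$ where the second integral vanishes — also degenerate. So the lower bound must come from a genuine interior estimate: bound $\phi_N$ from below by dropping the $x^{2-N}$ or $(D/2)^{2-N}$ term appropriately and choosing $x$ so that both factors are of order $D$.

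The main obstacle I anticipate is getting the \emph{clean} constants $\pi^2$ and $1/4$ to come out, rather than some messier universal constants. The factor $\pi^2$ strongly suggests that the upper bound should \emph{not} go through the lossy Muckenhoupt inequality at all, but rather through the exact model: on $[D/2,D]$ with the Neumann condition at $D$ and Dirichlet at $D/2$, one can directly compare $h_{0,N,D} = x^{N-1}$ to the constant density via Lemma~\ref{lem:HS}, since $\osc(1, x^{N-1}, [D/2,D]) = (D/(D/2))^{N-1} = 2^{N-1}$, giving $\lambda[h_{0,N,D},[D/2,D]] \leq 2^{N-1} \lambda[1,[D/2,D]] = 2^{N-1}\cdot \frac{\pi^2}{(D/2)^2} = 2^{N+1}\pi^2/D^2$ — but this gives $2^{N+1}$, not $\pi^2 N^2 2^{-(N-1)}$, so that's also not it. The correct route for the upper bound on $\lambda$ is surely to exhibit a specific test function $f$ on $[0,D]$ with $\int f h_{0,N,D}=0$ and compute the Rayleigh quotient: the natural candidate is something like $f(x) = $ a shifted/scaled version reflecting the symmetry of $h_{0,N,D}$ about $D/2$, perhaps $f(x) = \sin(\pi(x/D - 1/2))$ or a piecewise-linear function, and then the $N$-dependence $N^2 2^{-(N-1)}$ emerges from integrating $x^{N-1}$ against $f^2$ and $(f')^2$. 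I would set up this Rayleigh-quotient computation carefully: choose $f$ odd about $D/2$ (automatically satisfying the mean-zero condition by the symmetry $h_{0,N,D}(D-x) = h_{0,N,D}(x)$ — wait, this symmetry holds only when one reads off both pieces, and indeed $h_{0,N,D}(x) = \min(x,D-x)^{N-1}$ reversed, so $h_{0,N,D}(D-x) = h_{0,N,D}(x)$ does hold), estimate numerator and denominator, and extract the constant. For the lower bound on $\lambda$, I would indeed use Muckenhoupt as above, carrying through the elementary $u$-optimization to land on $\frac14 N^2 2^{-(N-1)}$; I expect one can afford to be slightly lossy there since the target constant $1/4$ already absorbes the Muckenhoupt factor of $4$, so evaluating $\phi_N$ at a single well-chosen point (e.g. $u = 2^{-1/N}$ or similar, chosen so the two factors balance) should suffice.
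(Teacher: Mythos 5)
Your starting point (reduce to $\Lambda^{D,N}(h_{0,N,D},[D/2,D])$ and apply Muckenhoupt) is the same as the paper's, but as written your plan has two genuine problems of logic. First, the displayed target for $S:=\sup_{u\in[1/2,1]}\phi_N(u)$ is vacuous: since $4/\pi^2<1$, the asserted upper bound $\frac{4\cdot 2^{-(N-1)}}{\pi^2 N^2}$ is strictly smaller than the asserted lower bound $\frac{2^{-(N-1)}}{N^2}$, so no $S$ can satisfy both; the exponent of $2$ should be $N-1$, and the correct pairing is: an upper bound $S\le 2^{N-1}/N^2$ yields $D^2\lambda\ge\frac{1}{4S}\ge\frac14 N^2 2^{-(N-1)}$, while a lower bound $S\ge 2^{N-1}/(\pi^2 N^2)$ yields $D^2\lambda\le\frac{1}{S}\le\pi^2 N^2 2^{-(N-1)}$. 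Second, and relatedly, your closing claim that the lower bound on $\lambda$ can be had by ``evaluating $\phi_N$ at a single well-chosen point'' is backwards: a single evaluation bounds the supremum from \emph{below} and hence bounds $\lambda$ from \emph{above}; the lower bound on $\lambda$ needs an estimate of $\phi_N(u)$ uniform in $u$, which here is in fact the easy part (for $N\ge 4$, $\int_{1/2}^{u}t^{1-N}\,dt\le\frac{2^{N-2}}{N-2}$ and $\int_u^1 t^{N-1}\,dt\le\frac1N$ give $S\le\frac{2^{N-2}}{N(N-2)}\le\frac{2^{N-1}}{N^2}$, the Muckenhoupt factor $4$ being exactly the $\frac14$ in the statement).

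The more serious gap is the upper bound. Your heuristic that ``$\pi^2$ cannot come out of the lossy Muckenhoupt inequality'' steered you away from the correct route: the paper obtains the upper bound precisely from the lossless side $\Lambda^{D,N}\le 1/A$, by bounding the supremum from below, namely $S\ge\frac{1}{N(N-2)}\sup_u\,(2^{N-2}-u^{-N})(1-u^N)=\frac{2^{N-2}+1-2^{N/2}}{N(N-2)}\ge\frac{2^{N-1}}{\pi^2 N^2}$ for $N\ge4$ (the $\pi^2$ is merely a convenient absolute constant, using $\pi^2>8$). The test functions you propose instead would not deliver the claim: $h_{0,N,D}$ has an exponentially deep bottleneck at $D/2$ (value $(D/2)^{N-1}$ against $D^{N-1}$ at the endpoints), so $D^2\lambda[h_{0,N,D}]$ is exponentially small in $N$, whereas $f(x)=\sin(\pi(x/D-1/2))$ gives a Rayleigh quotient of order $\pi^4/(N^2D^2)$ and $f(x)=x-D/2$ of order $1/D^2$ --- both far too large for big $N$. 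A test-function proof is possible, but $f$ must be essentially $\pm1$ on the two halves with the transition confined to a window of width about $D/N$ around $D/2$, i.e.\ exactly the test function that Muckenhoupt's criterion encodes. Finally, your explicit antiderivative degenerates at $N=2$ and the crude estimates above require $N$ bounded away from $2$; the paper treats $N\in(1,4)$ separately by comparing $h_{0,N,1}$ with the constant density via Lemma~\ref{lem:HS} (and Proposition~\ref{prop:diameter-D} for the matching upper bound), a case your argument still has to cover.
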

\noindent
Note that $2^{-(N-1)} = \sigma_{0,N}^{(1/2)}(D)^{N-1}$. 
\begin{proof}
Recall that $D^2 \lambda[h_{0,N,D}]$ is independent of $D$ by Lemma \ref{lem:monotone}, so we may assume $D=1$. Assume first that $N \geq 4$. 
Our task it to evaluate:
\begin{align}
 \label{eq:AN} A_N := A[h_{0,N,1},[1/2,1]] & = \sup_{x \in [1/2,1]} \int_{\frac{1}{2}}^x \frac{dt}{t^{N-1}} \int_{x}^1 t^{N-1} dt \\
\nonumber & = \sup_{x \in [1/2,1]} \frac{ 2^{N-2} - x^{2-N}}{N-2} \frac{1 - x^N}{N}  .
\end{align}
As $N \geq 4$, we trivially upper bound this by:
\begin{equation} \label{eq:AN-bound}
A_N \leq \frac{1}{4} \frac{1}{N (N-2)} 2^{N} \leq \frac{1}{4} \frac{2^{N-1}}{N^2} ,
\end{equation}
and the asserted lower bound follows by Proposition \ref{prop:Muck}. On the other hand, using $x^{2-N} \leq x^{-N}$ (as $x \in [1/2,1]$), we have:
\[
A_N \geq \frac{1}{N (N-2)} \sup_{x \in [1/2,1]} (2^{N-2} - x^{-N})(1 - x^N) = 
\frac{1}{N (N-2)} (2^{N-2} + 1 - 2 \cdot 2^{N/2-1}) .
\]
As $N \geq 4$, it is easy to check that this implies:
\[
A_N \geq \frac{1}{16} \frac{2^N}{N^2} \geq \frac{1}{\pi^2} \frac{2^{N-1}}{N^2}  ,
\]
and the asserted upper bound follows by Proposition \ref{prop:Muck}.

When $N \in (1,4)$, we can simply invoke Lemma \ref{lem:HS} to compare $h_{0,N,1}$ with the constant density $1$. Since $\lambda[1,[0,1]] = \pi^2$, we obtain the lower bound below:
\[
1 \geq \frac{\lambda[h_{0,N,1}]}{\pi^2} \geq 2^{-(N-1)} ;
\]
the upper bound follows by Proposition \ref{prop:diameter-D} since $1$ is itself an $\MCP(0,N)$ density on $[0,1]$. 
It is immediate to check that $2^{-(N-1)} \pi^2 \geq \frac{1}{4} N^2 2^{-(N-1)}$ and that $\pi^2 \leq 2 \pi^2 N^2 2^{-N}$ when $N \in (1,4)$, thereby concluding the proof. 
\end{proof}

\subsection{Case $K<0$}

A similar argument applies to the case $K < 0$. For brevity, we only supply the lower bound:
\begin{lemma}
For all $K < 0$, $N \in (1,\infty)$ and $D \in (0,\infty)$:
\[
\lambda[h_{K,N,D}] \geq \frac{1}{4} \max\brac{|K|(N-1), \frac{N^2}{D^2}} \sigma_{K,N}^{(1/2)}(D)^{N-1} . \]
\end{lemma}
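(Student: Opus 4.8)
The plan is to mimic the proof of Lemma \ref{lem:zero} (the $K=0$ case), but now with $s_{K/(N-1)}$ replaced by $\sinh$ rather than a monomial. By the scaling identity $\lambda[h_{Ka^2,N,D/a}] = a^2\lambda[h_{K,N,D}]$ noted at the start of Section \ref{sec:estimates}, it suffices to treat $K = -(N-1)$, so that $K/(N-1) = -1$ and $s_{K/(N-1)}(\theta) = \sinh\theta$; then the general statement follows by undoing the scaling. With this normalization, $h_{K,N,D}$ restricted to $[D/2,D]$ equals $\sinh^{N-1}(x)$, and since $\lambda[h_{K,N,D}] = \Lambda^{N,N}(h_{K,N,D},[0,D]) = \Lambda^{D,N}(h_{K,N,D},[D/2,D])$ as recorded before Section \ref{sec:estimates}, Muckenhoupt's criterion (Proposition \ref{prop:Muck}) gives $\lambda[h_{K,N,D}] \geq \frac{1}{4 A}$ with
\[
A = A[\sinh^{N-1},[D/2,D]] = \sup_{x \in [D/2,D]} \int_{D/2}^x \frac{dt}{\sinh^{N-1}t}\int_x^D \sinh^{N-1}t\, dt .
\]
So the entire task reduces to producing a good upper bound on $A$.

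To bound $A$, I would split the supremum into two factors and estimate each. For the second factor, use monotonicity of $\sinh$ to get $\int_x^D \sinh^{N-1}t\,dt \leq (D-x)\sinh^{N-1}D$. For the first factor, the key observation is that $t\mapsto \sinh^{N-1}t$ is log-convex-ish / increasing, so $\int_{D/2}^x \frac{dt}{\sinh^{N-1}t} \leq (x - D/2)\,\sinh^{-(N-1)}(D/2)$. Multiplying, and using $(x-D/2)(D-x)\leq D^2/4$ (actually $(D/2)^2/... $ — the product of the two lengths is at most $(D/2)^2 \cdot$ a constant; more precisely each of $x-D/2$ and $D-x$ lies in $[0,D/2]$, so the product is $\leq (D/2)^2/4$... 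I should be careful: $x-D/2 + D-x = D/2$, so by AM-GM the product is $\leq D^2/16$), I obtain
\[
A \leq \frac{D^2}{16}\cdot\frac{\sinh^{N-1}D}{\sinh^{N-1}(D/2)} = \frac{D^2}{16}\,\Big(\frac{\sinh D}{\sinh(D/2)}\Big)^{N-1}.
\]
Recalling that $\sigma_{K,N}^{(1/2)}(D)^{N-1} = (\sinh(D/2)/\sinh D)^{N-1}$ with our normalization (from \eqref{eq:sigma}), this gives $\lambda[h_{K,N,D}] \geq \frac{1}{4}\cdot\frac{4}{D^2}\,\sigma_{K,N}^{(1/2)}(D)^{N-1} = \frac{N^2}{D^2}\cdot\frac{1}{N^2}\cdot\ldots$ — I will need to reinstate the factors $|K|(N-1)$ and $N^2/D^2$ via the scaling, which introduces the $\max$.

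The second half of the $\max$, namely the term $|K|(N-1)\,\sigma_{K,N}^{(1/2)}(D)^{N-1}/4$, comes from a \emph{different} estimate of $A$ that does not throw away the hyperbolic geometry: instead of the crude length bounds, bound $\int_{D/2}^x \sinh^{-(N-1)}t\,dt$ and $\int_x^D \sinh^{N-1}t\,dt$ using the substitution that exploits $\frac{d}{dt}\cosh t = \sinh t$ (or a comparison with the explicit antiderivative), yielding a bound of the form $A \leq \frac{1}{(N-1)^2}\big(\sinh D/\sinh(D/2)\big)^{N-1}\cdot C$ that is dimensionally the ``$|K|(N-1)$'' term after undoing the scaling $K=-(N-1)$. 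Taking the better of the two bounds on $A$ — equivalently the max of the two lower bounds on $\lambda$ — gives the claimed inequality. The main obstacle, as in the $K=0$ case, is carrying out the hyperbolic integral estimates cleanly enough that the constants line up to exactly $\frac14$; the structure of the argument is routine once one decides which crude bound feeds which term of the $\max$, and since the statement only asks for a lower bound (the authors explicitly drop the upper bound), there is room to be slightly lossy in the constants as long as the final $\frac14$ survives.
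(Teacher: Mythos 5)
Your overall strategy matches the paper's: rescale to $K=-(N-1)$, observe $\lambda[h]=\Lambda^{D,N}(h,[D/2,D])$, bound the Muckenhoupt constant $A$ in two different ways, and take the better bound. Your $|K|(N-1)$ branch is sound: comparing with an explicit antiderivative (e.g.\ $\frac{d}{dt}\sinh^{N-1}t\geq (N-1)\sinh^{N-1}t$ and $\frac{d}{dt}\big(-\sinh^{-(N-1)}t\big)\geq (N-1)\sinh^{-(N-1)}t$, since $\cosh t\geq\sinh t$) gives $A\leq \frac{1}{(N-1)^2}\big(\sinh D/\sinh(D/2)\big)^{N-1}$, which after undoing the scaling is exactly the $\frac14|K|(N-1)\,\sigma_{K,N}^{(1/2)}(D)^{N-1}$ term; this is essentially the paper's estimate (the paper instead uses that $\sinh t/e^t$ is increasing).

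The genuine gap is in the $\frac{N^2}{D^2}$ branch. Your crude bounds $\int_{D/2}^x \sinh^{-(N-1)}t\,dt\leq (x-D/2)\sinh^{-(N-1)}(D/2)$, $\int_x^D\sinh^{N-1}t\,dt\leq (D-x)\sinh^{N-1}(D)$ together with $(x-D/2)(D-x)\leq D^2/16$ only yield $A\leq \frac{D^2}{16}\big(\sinh D/\sinh(D/2)\big)^{N-1}$, i.e.\ the lower bound $\frac{4}{D^2}\sigma_{K,N}^{(1/2)}(D)^{N-1}$ after rescaling. The claimed term is $\frac14\frac{N^2}{D^2}\sigma_{K,N}^{(1/2)}(D)^{N-1}$, so your estimate is short by a factor of order $N^2/16$, and for $N>4$ it does \emph{not} imply the lemma (the other branch cannot rescue it either: in the regime where the normalized diameter $D'$ is small, $|K|(N-1)\ll N^2/D^2$, so the max is attained by the very term you undershoot). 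The factor $N^2$ is not cosmetic -- it is the whole point of the careful evaluation in Lemma \ref{lem:zero}, where $A_N$ in (\ref{eq:AN}) is shown to be of order $2^{N}/N^2$ rather than $2^{N}/16$; "slightly lossy in the constants" here means losing a factor that grows like $N^2$. The paper recovers the $N^2$ by a second comparison: for $N\geq 4$, since $\sinh t/t$ is increasing, one has $\sinh^{-(N-1)}t\leq \sinh^{-(N-1)}(D'/2)\,(\tfrac{D'/2}{t})^{N-1}$ and $\sinh^{N-1}t\leq \sinh^{N-1}(D')\,(\tfrac{t}{D'})^{N-1}$ on $[D'/2,D']$, which reduces the Muckenhoupt supremum to the polynomial one and gives $A\leq \big(\sinh D'/\sinh(D'/2)\big)^{N-1}\frac{(D')^2}{2^{N-1}}A_N\leq \big(\sinh D'/\sinh(D'/2)\big)^{N-1}\frac{(D')^2}{N^2}$, while for $N\leq 4$ your crude bound indeed suffices since then $\frac{D^2}{16}\leq\frac{D^2}{N^2}$. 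So your plan needs this extra comparison step (or an equivalent exact evaluation of the supremum) to deliver the stated constant for all $N\in(1,\infty)$.
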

\noindent
Note that when $K \rightarrow 0-$, the limiting lower bound precisely coincides with the one from the previous lemma.
\begin{proof}
By scaling, we have for any $T > 0$:
\[
\lambda[h_{K,N,D}] = \frac{1}{T^2} \lambda[h_{T^2 K , N , D/T}] ,
\]
and therefore, denoting $D' = \sqrt{\frac{-K}{N-1}} D$,
\[
\lambda[h_{K,N,D}] = \frac{-K}{N-1} \lambda[h_{-(N-1),N,D'}] . 
\]
Our task it to evaluate:
\[
B_{N,D'} := A[h_{-(N-1),N,D'},[D'/2,D']] = \sup_{x \in [D'/2,D']} \int_{\frac{D'}{2}}^x \frac{dt}{\sinh^{N-1}(t)} \int_{x}^{D'} \sinh^{N-1}(t) dt .
\]
Since $\sinh(t) / e^t$ is an increasing function on $\R_+$, we first evaluate:
\begin{align*}
B_{N,D'} &  \leq \brac{\frac{\sinh(D')}{\sinh (D'/2)}}^{N-1} \sup_{x \in [D'/2,D']} \int_{\frac{D'}{2}}^x e^{(\frac{D'}{2} - t)(N-1)} dt \int_x^{D'} e^{(t-D')(N-1)} dt \\
& \leq \brac{\frac{\sinh(D')}{\sinh (D'/2)}}^{N-1}  \min\brac{\frac{1}{(N-1)^2} , \brac{\frac{D'}{4}}^2} . 
\end{align*}
In addition, since $\sinh(t)/t$ is increasing on $\R_+$, we also obtain when $N \geq 4$:
\begin{align*}
B_{N,D'} &  \leq \brac{\frac{\sinh(D')}{\sinh (D'/2)}}^{N-1} \sup_{x \in [D'/2,D']} \int_{\frac{D'}{2}}^x \brac{\frac{D'/2}{t}}^{N-1} dt \int_x^{D'} \brac{\frac{t}{D'}}^{N-1} dt \\
& = \brac{\frac{\sinh(D')}{\sinh (D'/2)}}^{N-1} \frac{(D')^2}{2^{N-1}} A_N \leq \brac{\frac{\sinh(D')}{\sinh (D'/2)}}^{N-1} \frac{(D')^2}{N^2} , \end{align*}
where $A_N$ was defined in (\ref{eq:AN}) and we employed (\ref{eq:AN-bound}) in the last inequality. Combining our estimates and recalling the definition of $D'$, we obtain for all $N \in (1,\infty)$:
\[
B_{N,D'} \leq \frac{1}{\sigma_{K,N}^{(1/2)}(D)^{N-1}} \min\brac{\frac{1}{(N-1)^2} , \frac{|K|}{N-1} \frac{D^2}{N^2} }. 
\]
Applying Proposition \ref{prop:Muck}, the assertion follows. 
\end{proof}

\subsection{Case $K > 0$}

\begin{lemma}
For all $K > 0$, $N \in (1,\infty)$ and $0 < D' \leq D \leq D_{K,N}$:
\[
(D')^2 \lambda[h_{K,N,D'}] \leq 
D^2 \lambda[h_{K,N,D}] \leq
(D')^2 \lambda[h_{K,N,D'}] \brac{\frac{\sigma_{K,N}^{(1/2)}(D)}{\sigma_{K,N}^{(1/2)}(D')}}^{N-1} . 
\]
\end{lemma}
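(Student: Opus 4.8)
The plan is to exploit the monotonicity already established in Lemma \ref{lem:monotone} for the lower bound, and then to obtain the matching upper bound via a direct comparison of the two model densities using the Hardy--Muckenhoupt-type perturbation principle (Lemma \ref{lem:HS}), after a suitable rescaling that places both densities on a common interval. Concretely: the lower bound $(D')^2 \lambda[h_{K,N,D'}] \leq D^2 \lambda[h_{K,N,D}]$ is exactly the content of Lemma \ref{lem:monotone} in the case $K \geq 0$ (which applies since $0 < D' \leq D \leq D_{K,N}$), so nothing new is needed there. The work is entirely in the upper bound.

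For the upper bound, first I would rescale: as in the proof of Lemma \ref{lem:monotone}, consider the density $x \mapsto h_{K,N,D'}(\frac{D'}{D} x)$, which is supported on $[0,D]$, and by the scaling relation for the Poincar\'e constant satisfies $(\frac{D'}{D})^2 \lambda[h_{K,N,D'}] = \lambda[h_{K,N,D'}((D'/D)\,\cdot\,)]$. The point is now to compare this rescaled density with $h_{K,N,D}$ on the common interval $[0,D]$ using Lemma \ref{lem:HS}. So I need to estimate $\osc(h_{K,N,D'}((D'/D)\,\cdot\,),\, h_{K,N,D},\, [0,D])$, i.e.\ the product of the sup and the sup of the reciprocal of the ratio $r(x) := h_{K,N,D}(x) / h_{K,N,D'}(\frac{D'}{D} x)$ over $x \in [0,D]$. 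By symmetry of both model densities about their respective midpoints (and the fact that the rescaling preserves the midpoint $D/2$), $r$ is symmetric about $D/2$, so it suffices to control $r$ on $[D/2, D]$, where both densities equal $s^{N-1}_{K/(N-1)}$ of (a rescaling of) the argument: explicitly $r(x) = \left( s_{K/(N-1)}(x) \big/ s_{K/(N-1)}(\frac{D'}{D} x) \right)^{N-1}$ for $x \in [D/2,D]$.

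The key elementary fact to isolate is the behaviour of $t \mapsto s_{K/(N-1)}(ct)/s_{K/(N-1)}(t)$ for the contraction factor $c = D'/D \in (0,1]$: since $s_\kappa$ is concave and increasing (on the relevant range, with $\kappa = K/(N-1) > 0$ and arguments below $D_{K,N}$), the ratio $s_\kappa(c\theta)/s_\kappa(\theta)$ is \emph{increasing} in $\theta$, hence on $x \in [D/2,D]$ it ranges between its value at $x = D/2$ and its value at $x = D$. Thus $r(x) \in [\,(\sigma_{K,N-1}^{(D'/D)}(D))^{N-1},\, 1\,]$ — wait, more carefully: at $x=D$ the ratio is $(s_\kappa(D')/s_\kappa(D))^{N-1}$ and at $x = D/2$ it is $(s_\kappa(D'/2)/s_\kappa(D/2))^{N-1}$. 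Since this shows $r \geq 1$ on $[D/2,D]$ is false in general, the honest bound is $\esssup_{[D/2,D]} r \cdot \esssup_{[D/2,D]} (1/r) = \left( \frac{s_\kappa(D/2)}{s_\kappa(D'/2)} \cdot \frac{s_\kappa(D')}{s_\kappa(D)} \right)^{N-1} = \left( \sigma_{K,N-1}^{(1/2)}(D') \big/ \sigma_{K,N-1}^{(1/2)}(D) \right)^{N-1}$, which matches (the reciprocal of) the factor $(\sigma^{(1/2)}_{K,N}(D)/\sigma^{(1/2)}_{K,N}(D'))^{N-1}$ appearing in the statement, after unwinding that $\sigma^{(1/2)}_{K,N} = (\sigma^{(1/2)}_{K,N-1})^{1-1/N} \cdot 2^{-1/N}$ — here I should double-check the exact exponent bookkeeping between $\sigma_{K,N}$ and $\sigma_{K,N-1}$ and adjust the claimed constant accordingly; this monotonicity-of-ratios computation together with the precise exponent chase is the one genuinely fiddly step. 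Then Lemma \ref{lem:HS} gives $\lambda[h_{K,N,D}] \leq \osc \cdot \lambda[h_{K,N,D'}((D'/D)\,\cdot\,)] = \osc \cdot (\frac{D'}{D})^2 \lambda[h_{K,N,D'}]$, and multiplying through by $D^2$ yields the claimed inequality.

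The main obstacle I anticipate is not conceptual but the careful verification of two monotonicity facts about $s_\kappa$ — that $s_\kappa(c\theta)/s_\kappa(\theta)$ is monotone in $\theta$ for fixed $c \in (0,1]$, and the sign/direction of the oscillation ratio — together with getting the exponent bookkeeping in $\sigma^{(t)}_{K,N}$ versus $\sigma^{(t)}_{K,N-1}$ exactly right so that the constant in the upper bound is literally $(\sigma^{(1/2)}_{K,N}(D)/\sigma^{(1/2)}_{K,N}(D'))^{N-1}$ and not merely comparable to it. Everything else (the rescaling, the appeal to Lemma \ref{lem:HS}, the reduction to $[D/2,D]$ by symmetry, and the lower bound from Lemma \ref{lem:monotone}) is routine.
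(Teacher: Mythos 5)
Your proposal follows the paper's own proof essentially step for step: the lower bound is quoted from Lemma \ref{lem:monotone}, and the upper bound is obtained by rescaling $h_{K,N,D'}((D'/D)t)$ onto $[0,D]$, reducing to $[D/2,D]$ by symmetry, and applying Lemma \ref{lem:HS} with the monotonicity of $t\mapsto s_{K/(N-1)}((D'/D)t)/s_{K/(N-1)}(t)$, which is exactly the paper's route. Two remarks on the points you flagged. First, the monotonicity claim is true, but the justification you sketch (``$s_\kappa$ is concave and increasing'') does not prove it: for $K>0$ the function $s_{K/(N-1)}$ is not increasing beyond $D_{K,N}/2$, and concavity together with $f(0)=0$ does not by itself imply that $\theta\mapsto f(c\theta)/f(\theta)$ is monotone (a piecewise-linear concave $f$ with $f(0)=0$ whose slope drops abruptly gives a counterexample). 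What is actually needed is that $\theta\mapsto \theta f'(\theta)/f(\theta)$ be non-increasing, i.e.\ for $f=\sin$ that $\theta\cot\theta$ decreases on $(0,\pi)$; the paper handles this by a direct derivative computation, or equivalently via concavity of $x\mapsto \log\sin e^{x}$, and you should do the same. Second, the exponent bookkeeping you worry about is moot: in this section $\sigma^{(1/2)}_{K,N}(\theta)$ denotes nothing more than $s_{K/(N-1)}(\theta/2)/s_{K/(N-1)}(\theta)$ (no $\tau$-type exponent $1-\frac1N$ or factor $2^{-1/N}$ enters), so the oscillation you computed,
\[
\brac{\frac{s_{K/(N-1)}(D/2)}{s_{K/(N-1)}(D'/2)}\cdot\frac{s_{K/(N-1)}(D')}{s_{K/(N-1)}(D)}}^{N-1}
= \brac{\frac{\sigma^{(1/2)}_{K,N}(D)}{\sigma^{(1/2)}_{K,N}(D')}}^{N-1},
\]
is literally the factor appearing in the statement, not its reciprocal as you wrote; once that inversion slip and the monotonicity justification are corrected, your argument is complete and coincides with the paper's.
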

\begin{proof}
The first inequality was already established in Lemma \ref{lem:monotone}. For the second inequality, consider the rescaled density $h_{K,N,D'}(\frac{D'}{D} t)$ on $[0,D]$, and compare its Poincar\'e constant to that of $h_{K,N,D}(t)$ using Lemma \ref{lem:HS}. By scaling and symmetry:
\[
\lambda[h_{K,N,D}] \leq \brac{\frac{D'}{D}}^2  \lambda[h_{K,N,D'}] \osc(h_{K,N,D}(t),h_{K,N,D'}((D'/D) t),[D/2,D]) , 
\]
where recall:
\[
\osc(h_{K,N,D}(t),h_{K,N,D'}((D'/D) t ),[D/2,D])  = \brac{\frac{\max_{t \in [D/2,D]} \frac{s_{K/(N-1)}(t)}{s_{K/(N-1)}((D'/D)t)}}{\min_{t \in [D/2,D]} \frac{s_{K/(N-1)}(t)}{s_{K/(N-1)}((D'/D)t)}}}^{N-1} .
\]
By directly calculating the derivative, 
it is straightforward to check that the function
$t \mapsto \frac{s_{K/(N-1)}(t)}{s_{K/(N-1)}((D'/D)t)}$ is non-increasing on $(0,D_{K,N}]$ when $0 < D' \leq D$; alternatively, one can use the fact that $(-\infty, \log(\pi))  \ni x \mapsto \log \sin \exp(x)$ is concave, which implies that $x \mapsto \log \sin \exp(x) - \log \sin(a \exp(x))$ is non-increasing for $a \in (0,1]$.
Consequently, the above maximum and minimum are attained at $t=D/2$ and $t=D$, respectively, and the assertion follows. 
\end{proof}

We will exploit the fact that we can recognize the limit of $(D')^2 \lambda[h_{K,N,D'}]$ at both endpoints of the interval $D ' \in [0,D_{K,N}]$. 

\begin{corollary} \label{cor:positive-zero-compare}
\[
\lambda[h_{0,N,D}] \leq \lambda[h_{K,N,D}] \leq \lambda[h_{0,N,D}] \brac{\frac{\sigma_{K,N}^{(1/2)}(D)}{\sigma_{0,N}^{(1/2)}(D)}}^{N-1} = \lambda[h_{0,N,D}] \brac{\frac{2 \sin(\sqrt{\frac{K}{N-1}} \frac{D}{2})}{ \sin(\sqrt{\frac{K}{N-1}} D)}}^{N-1} .
\]
\end{corollary}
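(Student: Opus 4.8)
The plan is to invoke the preceding Lemma with the limiting case $D' \to 0+$. First I would observe that, by the scaling relation $(D')^2 \lambda[h_{K,N,D'}] = \lambda[h_{K a^2, N, D'/a}]$ established at the start of Section~\ref{sec:estimates} (equivalently, by the remark preceding it), as $D' \to 0+$ the density $h_{K,N,D'}$ ``blows up'' to the flat model: rescaled to the unit interval, $h_{K,N,D'}((D'/D)\cdot)$ converges (after normalization) uniformly on $[0,D]$ to a constant multiple of $h_{0,N,D}$, since $s_{K/(N-1)}(\theta)/\theta \to 1$ as $\theta \to 0$. Hence $\lim_{D' \to 0+} (D')^2 \lambda[h_{K,N,D'}] = D^2 \lambda[h_{0,N,D}]$; more directly, one can cite the $K=0$ computation together with Lemma~\ref{lem:HS} applied to $h_{K,N,D'}$ versus $h_{0,N,D'}$ on a shrinking interval, whose oscillation tends to $1$.

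Next I would pass to the limit $D' \to 0+$ in the two-sided inequality of the preceding Lemma,
\[
(D')^2 \lambda[h_{K,N,D'}] \leq D^2 \lambda[h_{K,N,D}] \leq (D')^2 \lambda[h_{K,N,D'}] \brac{\frac{\sigma_{K,N}^{(1/2)}(D)}{\sigma_{K,N}^{(1/2)}(D')}}^{N-1} ,
\]
valid for all $0 < D' \leq D \leq D_{K,N}$. The left-hand side yields $D^2 \lambda[h_{0,N,D}] \leq D^2 \lambda[h_{K,N,D}]$, i.e. the first asserted inequality (after dividing by $D^2$). For the right-hand side, I would use that $\sigma_{K,N-1}^{(1/2)}(D') = s_{K/(N-1)}(D'/2)/s_{K/(N-1)}(D') \to 1/2$ as $D' \to 0+$, so $(\sigma_{K,N}^{(1/2)}(D'))^{N-1} \to 2^{-(N-1)}$; combining with $\lim_{D' \to 0+}(D')^2 \lambda[h_{K,N,D'}] = D^2 \lambda[h_{0,N,D}]$ and noting $2^{-(N-1)} = (\sigma_{0,N}^{(1/2)}(D))^{N-1}$ (as recorded after Lemma~\ref{lem:zero}), the right-hand limit becomes $D^2 \lambda[h_{0,N,D}] (\sigma_{K,N}^{(1/2)}(D)/\sigma_{0,N}^{(1/2)}(D))^{N-1}$, which after dividing by $D^2$ gives the middle inequality. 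Finally, the explicit form of the last expression follows by plugging in $\sigma_{K,N-1}^{(1/2)}(D) = \frac{\sqrt{N-1}}{\sqrt{K}}\sin(\sqrt{K/(N-1)}\,D/2)\big/\big(\frac{\sqrt{N-1}}{\sqrt{K}}\sin(\sqrt{K/(N-1)}\,D)\big)$ and $\sigma_{0,N-1}^{(1/2)}(D) = 1/2$, so that the ratio raised to the power $N-1$ equals $\brac{2\sin(\sqrt{K/(N-1)}\,D/2)/\sin(\sqrt{K/(N-1)}\,D)}^{N-1}$.

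The only delicate point is justifying the limit $\lim_{D'\to 0+}(D')^2\lambda[h_{K,N,D'}] = D^2\lambda[h_{0,N,D}]$ rigorously; everything else is algebraic bookkeeping with $s_\kappa$ and $\sigma_{K,N-1}^{(t)}$. I expect this limit to be the main (though still routine) obstacle: it can be handled either by the scaling identity plus uniform convergence of $\frac{1}{a}h_{Ka^2,N,D/a}(a\cdot) = h_{K,N,D}$-type families as $K\to 0$ (using $s_\kappa(\theta)\to\theta$ locally uniformly and hence $h_{Ka^2,N,D/a}\to h_{0,N,D/a}$, with Lemma~\ref{lem:HS} controlling the Poincaré constant), or by directly applying Muckenhoupt's criterion (Proposition~\ref{prop:Muck}) to $h_{K,N,D'}$ on $[D'/2,D']$ and tracking the $D'\to 0$ asymptotics of $A[h_{K,N,D'},[D'/2,D']]$, which reduces to the computation of $A_N$ from the proof of Lemma~\ref{lem:zero}. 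Either route closes the argument.
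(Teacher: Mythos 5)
Your proposal is correct and takes essentially the same route as the paper: pass to the limit $D'\to 0+$ in the preceding lemma and identify $\lim_{D'\to 0+}(D')^2\lambda[h_{K,N,D'}] = D^2\lambda[h_{0,N,D}]$ via the scaling identity together with Lemma \ref{lem:HS} (the paper phrases this as the limit being independent of $K$, since curvature is unnoticeable at infinitesimal scales, combined with scale-invariance of the $K=0$ quantities). Your treatment of that limit is, if anything, spelled out in more detail than the paper's.
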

\begin{proof}
Taking the limit $D' \rightarrow 0+$ in the previous lemma, it is clear that the limit of $(D')^2 \lambda[h_{K,N,D'}]$ is independent of $K$ (say by Lemma \ref{lem:HS}), as the curvature effect is unnoticeable at infinitesimal scales. Consequently, we have:
\begin{align*}
D^2 \lambda[h_{K,N,D}]  & \leq \lim_{D' \rightarrow 0+} (D')^2 \lambda[h_{K,N,D'}] \brac{\frac{\sigma_{K,N}^{(1/2)}(D)}{\sigma_{K,N}^{(1/2)}(D')}}^{N-1} \\
& = \lim_{D' \rightarrow 0+} (D')^2 \lambda[h_{0,N,D'}] \brac{\frac{\sigma_{K,N}^{(1/2)}(D)}{\sigma_{0,N}^{(1/2)}(D')}}^{N-1} \\
& = D^2 \lambda[h_{0,N,D}] \brac{\frac{\sigma_{K,N}^{(1/2)}(D)}{\sigma_{0,N}^{(1/2)}(D)}}^{N-1} ,
\end{align*}
where the last equality follows since all relevant expressions are scale invariant when $K=0$. This establishes the second inequality of the assertion; the first follows identically, or simply by (\ref{eq:trivial}) since $K > 0$. 
\end{proof}

As for the other endpoint, note that $h_{K,N,D_{K,N}}$ is simply the density $\sin(\sqrt{\frac{K}{N-1}} t)^{N-1}$ on $[0,D_{K,N}]$. In this special case, the model $\MCP(K,N)$ Poincar\'e density coincides with its $\CD(K,N)$ counterpart, which corresponds to the density obtained from pushing forward the uniform measure on an $N$-dimensional sphere having Ricci curvature equal to $K$ via the radial map $x \mapsto \d(x,x_0)$. Consequently, we know that:
\begin{lemma} \label{lem:sphere}
\[
\lambda[h_{K,N,D_{K,N}}] = \Lambda^{N,N}(h_{K,N,D_{K,N}} , [0,D_{K,N}]) = \frac{N}{N-1} K . 
\]
\end{lemma}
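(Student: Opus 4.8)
The plan is to recognize the density $h_{K,N,D_{K,N}}(t)=\sin\bigl(\sqrt{K/(N-1)}\,t\bigr)^{N-1}$ on $[0,D_{K,N}]$ as the one-dimensional model associated to the $N$-sphere and to compute its spectral gap directly. First I would normalize: by the scaling relation $\lambda[h_{Ka^2,N,D/a}]=a^2\lambda[h_{K,N,D}]$ it suffices to treat $K=N-1$, so that $D_{K,N}=\pi$ and the density becomes $\sin^{N-1}(t)$ on $[0,\pi]$; the claimed value $\frac{N}{N-1}K$ becomes $N$. Then $\lambda[h_{N-1,N,\pi}]=\Lambda^{N,N}(\sin^{N-1},[0,\pi])$ by the identification of the Poincar\'e constant with the first nonzero Neumann eigenvalue recalled before Lemma~\ref{lem:HS}.

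Next I would exhibit the eigenfunction explicitly. The weighted Laplacian here is $\Delta_\Psi f=f''+(N-1)\cot(t)\,f'$, and one checks by direct substitution that $f(t)=\cos t$ satisfies $f''+(N-1)\cot(t)f'=-\cos t-(N-1)\cos t=-N\cos t$, i.e. $-\Delta_\Psi f=Nf$. Moreover $f(t)=\cos t$ is strictly monotone on $[0,\pi]$ with a single interior zero at $\pi/2$ and automatically satisfies the Neumann conditions $f'(0)=f'(\pi)=0$ (since $f'=-\sin t$ vanishes at both endpoints). By the Sturm--Liouville facts recalled in Section~\ref{sec:ODE}, the eigenfunction associated to the first nonzero Neumann eigenvalue is characterized (among eigenfunctions) by being strictly monotone with exactly one interior zero; hence $\cos t$ is precisely that eigenfunction and $N$ is the corresponding eigenvalue. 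Undoing the normalization via the scaling relation gives $\lambda[h_{K,N,D_{K,N}}]=\frac{N}{N-1}K$ for general $K>0$.

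Alternatively — and this is perhaps the cleaner route to record — one may invoke the geometric interpretation already mentioned in the paragraph preceding the lemma: $h_{K,N,D_{K,N}}\,\mathcal L^1$ is the pushforward of the uniform probability measure on the round $N$-sphere of constant Ricci curvature $K$ under the distance-to-a-point map $x\mapsto \d(x,x_0)$, and this sphere is a genuine (smooth, compact, boundaryless) $\CD(K,N)$ space to which the Lichnerowicz theorem applies, giving spectral gap exactly $\frac{N}{N-1}K$ realized by a first spherical harmonic, which is radial (a multiple of $\cos\d(\cdot,x_0)$ after rescaling) and thus descends to the one-dimensional model with the same eigenvalue; conversely any one-dimensional Neumann eigenfunction lifts to a radial function on the sphere, so the two spectral gaps coincide.

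The only real point requiring care — the ``main obstacle,'' though it is mild — is the justification that $\cos t$ is the \emph{first} nonzero eigenfunction rather than a higher one: this follows because a first nonzero Neumann eigenfunction in one dimension has exactly one interior sign change (equivalently is strictly monotone, as recalled in Section~\ref{sec:ODE}), and $\cos t$ has exactly one zero on $(0,\pi)$; since the eigenvalues are simple, the eigenfunction with this property is unique up to scaling, pinning down both it and its eigenvalue. With $N$ identified as the eigenvalue in the normalized case, the general formula $\frac{N}{N-1}K$ follows immediately by scaling, completing the proof.
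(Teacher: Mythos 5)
Your proposal is correct and follows essentially the same route as the paper: exhibit $\cos\bigl(\sqrt{K/(N-1)}\,t\bigr)$ as a strictly monotone function satisfying the Neumann conditions and the eigenvalue equation with eigenvalue $\frac{N}{N-1}K$, and conclude from the monotonicity/single-zero characterization of the first nonzero Neumann eigenfunction that this is indeed the spectral gap. The preliminary rescaling to $K=N-1$ and the alternative Lichnerowicz/sphere remark are harmless additions but not a genuinely different argument.
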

\begin{proof}
Observe that $u(t) = \cos(\sqrt{\frac{K}{N-1}} t)$ is a monotone function on $[0,D_{K,N}]$ satisfying Neumann boundary conditions there and that:
\[
-\Delta^{N,N}_{h_{K,N,D_{K,N}}} u = \frac{N}{N-1} K u . 
\]
Hence $u$ must be the eigenfunction corresponding to the first non-zero Neumann eigenvalue. 
\end{proof}

From the previous discussion we can already conclude that when $K > 0$:
\begin{equation} \label{eq:false}
(0,D_{K,N}] \ni D \mapsto \lambda[h_{K,N,D}] \text{ is \textbf{not} non-increasing.}
\end{equation}
This is in stark contrast to the case $K \leq 0$, when the above function is strictly decreasing by (\ref{eq:monotone}), and explains why in the formulations of Theorem \ref{thm:main} and Corollary \ref{cor:diameter-at-most-D} we really need to take an infimum over $D' \in (0,\min(D,D_{K,N})]$ when $K > 0$.

Let us only show (\ref{eq:false}) for $N \geq 13$. Indeed, by Corollary \ref{cor:positive-zero-compare} and Lemma \ref{lem:zero}, we know that:
\[
\lambda[h_{K,N,D}] \leq \frac{\pi^2 N^2}{D^2} \brac{\frac{\sin(\sqrt{\frac{K}{N-1}} \frac{D}{2})}{ \sin(\sqrt{\frac{K}{N-1}} D)}}^{N-1} . 
\]
Setting $D = \alpha D_{K,N}$, $\alpha \in (0,1)$, and recalling Lemma \ref{lem:sphere}, this is equal to:
\[
= \lambda[h_{K,N,D_{K,N}}] \frac{N}{\alpha^2} \brac{\frac{\sin(\frac{\alpha}{2} \pi)}{ \sin(\alpha \pi )}}^{N-1} . 
\]
When $N \geq 13$, it is immediate to verify that for $\alpha = 1/2$, the term on the right is strictly smaller than $1$, and we deduce $\lambda[h_{K,N,D_{K,N}/2}] < \lambda[h_{K,N,D_{K,N}}]$. 

It is also possible to show using Hadamard's formula that the derivative of $D \mapsto \lambda[h_{K,N,D}]$ is strictly positive at $D = D_{K,N}$, thereby verifying (\ref{eq:false}) for all $N > 1$; we omit the details.

\section{Proof of Main Theorem} \label{sec:proof}

We are now ready to prove our Main Theorem \ref{thm:main}. 

\begin{proof}[Proof of Theorem \ref{thm:main}]
Given a Lipschitz function $f$ on $(X,\d)$ with $\int_\Omega f \mm = 0$, set $g = f 1_{\Omega}$. As $(\supp(\mm),\d)$ is proper and $\mm$ is locally finite, the integrability assumption $\int_X |g(x)| d(x,x_0) \mm(dx) < \infty$ is clearly satisfied, and we may apply the Localization Theorem \ref{thm:localization}. It follows that there exists an $\mm$-measurable subset $\mathsf T \subset X$ and a family $\{X_{q}\}_{q \in Q} \subset X$, such that: 
\begin{enumerate}
\item There exists a disintegration of $\mm\restr{\mathsf{T}}$ on $\{X_{q}\}_{q \in Q}$:
\[
\mm\restr{\mathsf{T}} = \int_{Q} \mm_{q} \, \qq(dq)  ~,~ \qq(Q) = 1 . 
\]  
\item For $\qq$-a.e. $q \in Q$, $X_q$ is a closed geodesic in $(X,\d)$.
\item For $\qq$-a.e. $q \in Q$, $\mm_q$ is a Radon measure supported on $X_q$ with $\mm_q \ll  \mathcal H^1 \restr{X_q}$.
\item For $\qq$-a.e. $q \in Q$, the metric measure space $(X_{q}, \d,\mm_{q})$ verifies $\MCP(K,N)$.
\item For $\qq$-a.e. $q \in Q$, $\int_{\Omega} f \mm_q = \int_X g \mm_q = 0$, and $f \equiv 0$  $\mm$-a.e. on $\Omega \setminus \mathsf T$. 
\end{enumerate}

Since $\supp(g \mm) \subset \Omega$, we know that $\diam(\supp(g \mm)) \leq D$. 
Let $q \in Q$ be such that all of the above properties hold, and denote:
\[
L_q := \conv_{X_q}(\supp(g \mm) \cap X_q) \; ;
\]
although this is not important, we point out that we take the geodesic hull inside the metric space $(X_{q}, \d)$ which is isometric to a closed subinterval of $(\R,|\cdot|)$. It follows that $\diam(L_q) \leq D$, and we have:
\begin{equation} \label{eq:Lq}
\supp(g \mm) \cap X_q \subset L_q \subset \conv(\supp(g \mm)) \cap X_q  .
\end{equation}
Since $\mm|_T(\{ g \neq 0\} \setminus \supp(g \mm)) = 0$, the above disintegration and Fubini's theorem imply that for $\qq$-a.e. $q \in Q$, $g \equiv 0$ $\mm_q$-a.e. on $X \setminus \supp(g \mm)$ and in particular on $X_q \setminus L_q$, and hence:
\begin{enumerate}
\setcounter{enumi}{5}
\item For $\qq$-a.e. $q \in Q$, $f \equiv 0$ $\mm_q$-a.e. on $X_q \cap \Omega \setminus (L_q \cap \Omega)$.
\end{enumerate}
We therefore add this requirement from $q$ to our previous requirements, as they all hold for $\qq$-a.e. $q \in Q$. 

 Since the $\MCP(K,N)$ condition is closed under restrictions onto geodesically convex subsets, it follows that $(L_q , \d, \mm_q|_{L_q})$ verifies $\MCP(K,N)$; however, since $\Omega$ was not assumed to be geodesically convex, note that $(L_q \cap \Omega, \d, \mm_q|_{L_q \cap \Omega})$ \textbf{may not} satisfy $\MCP(K,N)$. Nevertheless, we claim that:
 \begin{equation} \label{eq:goal}
\int_{L_q \cap \Omega} f^2 \mm_q \leq \frac{1}{\lambda_{{\mathcal {MCP}}_{K,N,D}}} \int_{L_q} |\nabla_{L_q} f|^2 \mm_q \; ,
\end{equation}
where recall $\lambda_{{\mathcal {MCP}}_{K,N,D}}$ was defined in (\ref{eq:intro-optimum}), and $|\nabla_{L_q} f|$ is the local Lipschitz constant of $f$ on $(L_q,\d)$. 

 To see this, first note that by property (6):
\begin{equation} \label{eq:LqBallanced}
 \int_{L_q\cap \Omega} f \mm_q  = \int_{X_q \cap \Omega} f \mm_q = 0 \; ;
\end{equation}
however,  $\int_{L_q} f \mm_q$ may not vanish since $L_q$ could exit and reenter $\Omega$ if $\Omega$ is not geodesically convex. To establish (\ref{eq:goal}), we may assume that $\mm_q(L_q) > 0$, since otherwise there is nothing to prove. We know that the one-dimensional metric measure space $(L_q , \d, \mm_q|_{L_q})$ is isometric to $(I,|\cdot|,h_q \mathcal{L}^1)$ for some closed interval $I \subset \R$ with $\diam(I) \leq D$ and with $h_q$ an $\MCP(K,N)$ density (by Lemma \ref{lem:MCP-density}), and we identify these two representations. Applying Corollary \ref{cor:diameter-at-most-D} to the function $\bar f := f - \frac{1}{\mm_q(L_q)} \int_{L_q} f \mm_q$, since $\int_{L_q} \bar f \mm_q = 0$, we deduce:
\[
\int_{L_q} f^2 \mm_q - \frac{(\int_{L_q} f \mm_q)^2}{\mm_q(L_q)} = \int_{L_q} (\bar f)^2 \mm_q \leq \frac{1}{\lambda_{{\mathcal {MCP}}_{K,N,D}}} \int_{L_q} |\nabla_{L_q} f|^2 \mm_q .
\]
This immediately implies (\ref{eq:goal}) if $\mm_q(L_q \setminus \Omega) = 0$ by (\ref{eq:LqBallanced}), while otherwise, (\ref{eq:goal}) follows since:
\[
\int_{L_q \setminus \Omega} f^2 \mm_q \geq \frac{(\int_{L_q \setminus \Omega} f \mm_q)^2}{\mm_q(L_q \setminus \Omega)} \geq 
\frac{(\int_{L_q \setminus \Omega} f \mm_q)^2}{\mm_q(L_q)} = \frac{(\int_{L_q} f \mm_q)^2}{\mm_q(L_q)}  ,
\]
where we employed (\ref{eq:LqBallanced}) again  in the final transition. 

Recalling Property (6) and (\ref{eq:Lq}), (\ref{eq:goal}) implies:
\[
\int_{X_q \cap \Omega} f^2 \mm_q \leq \frac{1}{\lambda_{{\mathcal {MCP}}_{K,N,D}}} \int_{X_q \cap \conv(\supp(g \mm))} |\nabla_{L_q} f|^2 \mm_q . 
\]
Using $|\nabla_{L_q} f|\leq |\nabla_X f|$ and integrating this with respect to $\qq$, we deduce (after recalling that $f \equiv 0$ $\mm$-a.e. on $\Omega \setminus T$):
\[
\int_{\Omega} f^2 \mm = \int_{T \cap \Omega} f^2 \mm \leq \frac{1}{\lambda_{{\mathcal {MCP}}_{K,N,D}}} \int_{T \cap \conv(\supp(g \mm))} |\nabla_{X} f|^2 \mm .
\]
Since $\conv(\supp (g \mm)) \subset \conv(\Omega)$, this concludes the proof.
\end{proof}

\section{Comparison with prior results} \label{sec:conclude}

Before concluding, we mention some previously known related results on $\MCP(K,N)$ spaces. 

In \cite[Theorem 6.4 and Corollary 6.6]{SturmCD2}, Sturm obtained a certain Poincar\'e inequality on geodesic balls of an $\MCP(K,N)$ space ($N \in (1,\infty)$) under the assumption that the function
\[
w_N(x) := \lim_{\eps \rightarrow 0+} \frac{m(B_\eps(x))}{\eps^N} 
\]
is locally bounded; the inequality reads:
\[
\int_{B_r(x_0)} f \mm = 0 \Rightarrow \lambda^w_{K,N}(r) \frac{\mm(B_r(x_0))}{r^N} \int_{B_r(x_0)} f^2 \mm \leq \int_{B_{3r}(x_0)} w_N |\nabla_X f|^2 \mm ,
\]
with:
\[
\lambda^w_{K,N}(r) = \frac{1}{(2r)^2} \cdot \left\{\begin{array}{lll}  
\frac{2+N}{N 2^{N}} & \text{if}~~ K \geq 0 \\
\frac{2+N}{N 2^{N}} \brac{\frac{2 r}{s_{K/(N-1)}(2 r)}}^{N-1}  & \text{if}~~ K < 0 \\
\end{array}
\right . .
\]
While the $\frac{\mm(B_r(x_0))}{r^N}$ term may be bounded from below on any compact set by employing Bishop--Gromov volume comparison (valid on $\MCP(K,N)$ spaces -- see \cite{SturmCD2,Ohta-MCP}), it is not clear how to control $w_N$, or how to offset it using the $\frac{\mm(B_r(x_0))}{r^N}$ term (Bishop--Gromov goes in the wrong direction here). 
All in all, we do not see how to obtain an explicit quantitative expression for the Poincar\'e constant on balls from this approach.  

\medskip

In \cite{VonRenesse-L1PoincareOnMCP}, M.~von Renesse obtained the following $L^1$-Poincar\'e inequality on geodesic balls of an $\MCP(K,N)$ space ($N \in (1,\infty)$) under a certain non-branching assumption, which in particular holds if for $\mm$-a.e. point $x$, the cut-locus of $x$ has zero $\mm$-measure. It was shown in \cite[Remark 7.5]{CavallettiEMilman-LocalToGlobal} that the latter property holds on essentially non-branching $\MCP(K,N)$ spaces. Von Renesse's inequality reads:
\[
2 r \lambda^1_{K,N}(r) \frac{1}{\mm(B_r(x_0))} \int_{B_r(x_0)} \int_{B_r(x_0)} \frac{|f(x) - f(y)|}{\d(x,y)} \mm(dx) \mm(dy) \leq \int_{B_{2r}(x_0)} |\nabla_X f| \mm \; ;
\]
in particular, it implies (using $\d(x,y) \leq 2r$ and Jensen's inequality) that:
\[
\int_{B_r(x_0)} f \mm = 0 \Rightarrow  \lambda^1_{K,N}(r) \int_{B_r(x_0)} |f| \mm \leq \int_{B_{2r}(x_0)} |\nabla_X f| \mm \;,
\]
with:
\[
\lambda^1_{K,N}(r) = \frac{1}{2r} \inf_{t \in [1/2,1] , \theta \in (0,2r]} t \; \sigma_{K,N}^{(t)}(\theta)^{N-1} =  \frac{1}{2r} \cdot \left\{\begin{array}{lll}  
\frac{1}{2} \frac{1}{2^{N-1}}  & \text{if}~~ K \geq 0 \\
\frac{1}{2} \sigma_{K,N}^{(1/2)}(2r)^{N-1}  & \text{if}~~ K < 0 \\
\end{array}
\right .  . 
\]

We do not know how to quantitatively compare between the above $L^1$-Poincar\'e  inequality and our $L^2$-Poincar\'e one; it is always possible to pass from an $L^1$ tight version to an $L^2$ tight one (when $B_{2r}$ on the right-hand-side is replaced by $B_r$) by applying it to $f = g^2 \text{sgn}(g)$ and using Cauchy--Schwarz, but we do not know how to do this for the above non-tight version. Note that by the results of \cite{SobolevMetPoincare}, a non-tight Poincar\'e inequality may always be tightened on any geodesic space, but this results in loss of explicit constants. Still, it might be interesting to compare the above explicit expression for $2r \lambda^1_{K,N}(r)$ with our estimates on $(2r)^2 \lambda_{\mathcal{MCP}_{K,N,2r}}$ from Section \ref{sec:estimates} (with this scaling, both are unit-free). Besides the fact that our estimates apply to any $\Omega$ with $\diam(\Omega) \leq 2r$, two other notable differences are that our estimates improve when $K > 0$, and that we have an additional advantageous $N^2$ term when $K \leq 0$. In any case, as explained in the Introduction, our Poincar\'e constant $\lambda_{\mathcal{MCP}_{K,N,D}}$ is best possible. 

\medskip

We also mention a recent result of Eriksson-Bique \cite[Theorem 1.3]{Eriksson-Poincare}, who established a $(1,p)$-local-Poincar\'e inequality for $p > N+1$ on $\MCP(K,N)$ spaces, without any non-branching assumptions whatsoever (when $K \geq 0$ he also obtained a global version). 

\medskip

Lastly, it is worthwhile to mention the results of Yang and Lian \cite{YangLian-WeightedPoincare}, who obtained very precise (and in some cases optimal) constants for weighted Poincar\'e inequalities on geodesic balls $B_r$ in the Heisenberg group as well as other Carnot groups, with \emph{Dirichlet boundary conditions} (when the corresponding test functions are compactly supported inside $B_r$). The optimal constant for the global Sobolev inequality on $\H^n$ was discovered by Jerison and Lee \cite{JerisonLee-ExtremalsForSobolevOnHeisenberg}.

\def\cprime{$'$} \def\cprime{$'$} \def\textasciitilde{$\sim$}

\end{document}